\newtheorem{theorem}{Theorem}[section]
\newtheorem{lemma}[theorem]{Lemma}
\newtheorem{prop}[theorem]{Proposition}
\newtheorem{cor}[theorem]{Corollary}
\newtheorem*{metaconjectureA}{Meta-Conjecture}
\newtheorem*{conjecture}{Conjecture}
\newtheorem{main}{Theorem}
\newtheorem{cmain}[main]{Corollary}
\newtheorem{qmain}[main]{Question}
\newtheorem{conjmain}[main]{Conjecture}
\theoremstyle{definition}
\newtheorem{definition}[theorem]{Definition}
\newtheorem*{question}{Questions}
\newcommand{\N}{\mathbb{N}}
\newcommand{\Z}{\mathbb{Z}}
\newcommand{\catz}{CAT$(0)$ }
\newcommand{\cftf}{$C(4)$-$T(4)$ }
\newcommand{\fix}{Fix}
\newcommand{\NPC}{\mathcal X}
\newcommand{\mk}{\medskip}
\newcommand{\bthm}{\begin{theorem}}
\newcommand{\ethm}{\end{theorem}}
\newcommand{\bmthm}{\begin{main}}
\newcommand{\emthm}{\end{main}}
\newcommand{\ben}{\begin{enumerate}}
\newcommand{\een}{\end{enumerate}}
\newcommand{\bit}{\begin{itemize}}
\newcommand{\eit}{\end{itemize}}
\newcommand{\f}{\frac}
\newcommand{\bp}{\begin{proof} \upshape}
\newcommand{\ep}{\end{proof}}
\newcommand{\beq}{\begin{eqnarray*}}
\newcommand{\eeq}{\end{eqnarray*}}
\newcommand{\ra}{\rightarrow}
\newcommand{\blem}{\begin{lemma}}
\newcommand{\elem}{\end{lemma}}
\newcommand{\bpro}{\begin{prop}}
\newcommand{\epro}{\end{prop}}
\newcommand{\bs}{\symbol{92}}
\newcommand{\bcor}{\begin{cor}}
\newcommand{\ecor}{\end{cor}}
\newcommand{\diam}{\operatorname{diam}}
\renewcommand{\>}{\rangle}
\newcommand{\bmcor}{\begin{cmain}}
\newcommand{\emcor}{\end{cmain}}
\newcommand{\bq}{\begin{question}}
\newcommand{\eq}{\end{question}}
\newcommand{\bmq}{\begin{qmain}}
\newcommand{\emq}{\end{qmain}}
\newcommand{\bmconj}{\begin{conjmain}}
\newcommand{\emconj}{\end{conjmain}}
\newcommand{\st}{\, | \,}
\begin{document}
	
	\title[Locally elliptic actions, torsion groups, and NPC spaces]{Locally elliptic actions, torsion groups, and nonpositively curved spaces}
	
	
	\author[T.~Haettel]{Thomas Haettel$^{\diamondsuit}$}
	\address{IMAG, Univ Montpellier, CNRS, France \\
	Place Eugène Bataillon\\
	34090 Montpellier, France}
	\email{thomas.haettel@umontpellier.fr}
	\thanks{$\diamondsuit$ Partially supported by the French project ANR-16-CE40-0022-01 AGIRA}
	
	\author[D.~Osajda]{Damian Osajda$^{\dag}$}
	\address{Faculty of Mathematics and Computer Science,
		University of Wroc\l aw\\
		pl.\ Grun\-wal\-dzki 2/4,
		50--384 Wroc\-{\l}aw, Poland}
	\address{Institute of Mathematics, Polish Academy of Sciences\\
		\'Sniadeckich 8, 00-656 War\-sza\-wa, Poland}
	\email{dosaj@math.uni.wroc.pl}
	\thanks{$\dag$ Partially supported by (Polish) Narodowe Centrum Nauki, UMO-2018/31/G/ST1/02681.}
	
	\date{\today}

	\begin{abstract}


		Extending and unifying a number of well-known conjectures and open questions, we conjecture that locally elliptic actions (that is, every element has a bounded orbit) by automorphisms of finitely generated groups on finite dimensional nonpositively
		curved complexes have global fixed points. In particular, finitely generated torsion groups cannot act
		without fixed points on such spaces. We prove these conjectures for a wide class of complexes, including all infinite families of Euclidean buildings of classical type, Helly complexes, some graphical small cancellation and systolic complexes, uniformly locally finite Gromov hyperbolic graphs. We present consequences of these results, e.g.\ concerning the automatic continuity.
				
		Our main tool are Helly graphs. We present and study a new notion of geodesic clique paths. Their local-to-global properties are crucial in our proof of ellipticity results.
		
	\end{abstract}
	
	\maketitle
	
	\section{Introduction}
	
	%
	%
	%
	%
	
	\subsection{(Locally) elliptic actions}
	\label{s:metaconj}
	We consider actions of groups on complexes (e.g.\ simplicial complexes or graphs) by automorphisms.  
	Below we formulate a meta-conjecture concerning such actions, being a far-going unification and generalization
	of several well-known conjectures and open questions.
	
	A \emph{locally elliptic action} is an action in which the orbit of each element is bounded. If the orbit of the whole group is bounded we say that the action is \emph{elliptic}.
	The \emph{nonpositive curvature} assumption below refers to a wide understanding of various `nonpositive curvature' conditions as discussed
	in Section~\ref{s:motiv} -- therefore we name the statement below `meta-conjecture' rather than an actual conjecture. Similarly,
	\emph{finite dimensionality} refers to any reasonable notion of dimension -- it might be thought of as the maximum of (topological) dimensions of cells.	
	
	\begin{metaconjectureA}
		Every locally elliptic action of a finitely generated group on a finite-dimensional nonpositively curved complex is elliptic. In particular, every action of a finitely generated torsion group
		on such complex is elliptic.
	\end{metaconjectureA}

Note that Serre proved that every infinitely generated group has an action on a tree without a fixed point (\cite[Theorem~15]{Serre_Trees}), so the assumption of finite generation is crucial. The assumption on finiteness of the dimension is also essential -- Grigorchuk's infinite torsion groups as well as infinite Burnside groups (torsion groups of bounded exponent) can act with unbounded orbits on Hilbert spaces or infinite dimensional \catz cubical complexes \cite{Grigorchuk1984,OsajdaDuke}. Furthermore, the fact that we consider the combinatorial setting of complexes, rather than the one of (arbitrary) metric spaces is important as well -- see Section~\ref{s:motiv} for explanations.

\mk

	For nonpositively curved complexes we have in mind (see Section~\ref{s:motiv}) having a bounded orbit (for a group, or a single automorphism) is usually equivalent to having a fixed point. Therefore, `locally elliptic' can be thought of as `every group element fixes a point' and, correspondingly, `elliptic' should mean `having a global fixed point'. 
	An actual conjecture being a clarification of the above Meta-Conjecture is formulated in Section~\ref{s:motiv}, where
	we also describe in detail motivations and earlier related results. Let us just mention here that the Meta-Conjecture is related to and motivated by, notably: the Tits Alternative, property (FA), Kazhdan's property (T), automatic continuity.
	\medskip
	
	Our main aim in this paper is proving the Meta-Conjecture for a large class of nonpositively curved complexes (see Section~\ref{sec:Helly} for the definition of combinatorial dimension).
	
\bmthm \label{thm:main_elliptic}
		Let a finitely generated group act on a graph $\Gamma$ of finite combinatorial dimension.
		If the action is locally elliptic then it is elliptic. In particular, every action of a finitely generated torsion group on $\Gamma$ is elliptic.
	\emthm
	

Note that in the case when $X$ is a \catz complex, the answer is known only in dimension $2$ (see~\cite{NOP-D}) or with strong restrictions on the group (see~\cite{Izeki_Karlsson}). It is therefore striking that for all graphs with finite combinatorial dimension, we have a very simple and general answer to that question.

\mk

We gather few consequences of Theorem~\ref{thm:main_elliptic} in the following (see also Theorem~\ref{thm:helly_elliptic} in the next section). For more explanations and the description of related results see Section~\ref{s:motiv}.
	
	\bmcor
	\label{cor:fixcor}
		Let a finitely generated group act on a complex $X$ which is one of the following:
		\begin{enumerate}
			\item a Euclidean building of type $\widetilde{A}_n$, $\widetilde{B}_n$, $\widetilde{C}_n$, or $\widetilde{D}_n$;
			\item \label{cor:fixcor2}a uniformly locally finite Gromov hyperbolic graph;
			\item a finite-dimensional CAT(0) cube complex;
			\item \label{cor:fixcor3} a simply connected graphical \cftf small cancellation complex;
			\item \label{cor:fixcor4} a quadric complex;
			\item the Salvetti complex of an FC-type type Artin group.
			\item the Deligne complex of a cyclic type Artin group (see~\cite{haettel_huang_garside_artin_product_Z} for the definition).
		\end{enumerate}
		If the action is locally elliptic then it is elliptic. In particular, every action of a finitely generated torsion group on $X$ is elliptic.
	\emcor

In the case of CAT(0) cube complexes, we recover~\cite[Corollary~B]{leder_varghese}. In the case of Euclidean buildings of type $\tilde{A}$, $\tilde{B}$, $\tilde{C}$ or $\tilde{D}$, we prove a conjecture by Marquis (see~\cite[Conjecture~2]{Marquis2013}). It is also a generalization of~\cite[Corollaire~3]{Parreau2003} and \cite[Corollary~1.2]{NOP-D} in the case of discrete Euclidean buildings. Note that, if $X$ is any building and the action of the group is proper and cocompact, the result follows from~\cite{karpinski_osajda_przytycki}. One should also point out that Breuillard and Fujiwara ask for quantitative versions of these results, see~\cite[Theorem~7.16]{breuillard_fujiwara}.

\mk
		
	The above results indicate the importance of the notion of combinatorial dimension (introduced by Dress in \cite{Dress1984}) for exploring fixed-point questions. Unlike many other notions of dimension -- e.g.\ (virtual) cohomological dimension, geometric dimension, or asymptotic dimension -- the combinatorial dimension has not been yet  studied thoroughly in the context of groups. We believe that the following question is of utmost importance.
	
	\bmq
			Which (nonpositively curved) spaces have finite combinatorial dimension?
	\emq	

	In particular, we pose the following conjecture. Establishing it would extend Corollary~\ref{cor:fixcor}(1) to all (not only Euclidean or hyperbolic) buildings. 
	
	\bmconj\label{conj:buildings}
		Buildings have finite combinatorial dimension. 
	\emconj
	
	Aside from Theorem~\ref{thm:main_elliptic} we provide the following exemplary result confirming the Meta-Conjecture (see Section~\ref{s:relres} for further examples from the literature).
	
	\bmthm \label{thm:systolic_elliptic}
Let $G$ be a finitely generated group acting strongly rigidly by locally elliptic automorphisms on an $18$-systolic complex
$X$.  Then $G$ acts elliptically on $X$.
\emthm	

Although $18$-systolic complexes are (Gromov) hyperbolic, observe that in the above theorem we do not assume that the dimension is finite. 
We believe that the assumptions on high systolicity can be weakened and the assumptions on the strong rigidity of actions can be omitted (see Conjecture in Section~\ref{s:conj}). Still, even in this restricted form Theorem~\ref{thm:systolic_elliptic} applies to an important class of monster groups as constructed in \cite{OsajdaGAFA,Osajda-small}. Note that such groups are not hyperbolic and that  groups with torsion 
constructed there contain elements of arbitrarily large order.

\bmthm \label{thm:C(13)_elliptic}
Finitely generated torsion subgroups of $C(18)$ graphical small cancellation groups are finite.
\emthm

	\subsection{Locally elliptic actions on Helly graphs}
	\label{s:Helly_intro}
	
	In fact, the main object of interest throughout the article and our main tool for proving Theorem~\ref{thm:main_elliptic} are Helly graphs and their automorphism groups.
	
	\mk
		
Helly graphs form a very natural class of metric spaces with nonpositive curvature features, which are of increasing interest in geometric group theory. Their definition is extremely simple: a connected graph is Helly if any family of pairwise intersecting combinatorial balls has a non-empty global intersection, the so-called Helly property. See~\cite{Helly} for a study of group actions on Helly graphs.

\mk

Any graph embeds in an essentially unique minimal Helly graph, its Helly hull. So the study of Helly graphs is really meaningful when some finiteness conditions are assumed: for instance local finiteness, or finite combinatorial dimension.

\mk

One may think of Helly graphs as a very nice class of nonpositively curved, combinatorially defined spaces. In Section~\ref{sec:Helly} we describe it in more details, providing numerous examples of groups acting nicely on Helly graphs and reminding analogies between \catz and Helly, as well as advantages of the combinatorial Helly approach.

The following theorem, adding to the list 
 in Corollary B, is in fact equivalent to Theorem~\ref{thm:main_elliptic}. (In the text, we conclude Theorem~\ref{thm:main_elliptic} from it.) 
	
	\bmthm \label{thm:helly_elliptic}
		Let $X$ denote a Helly graph with finite combinatorial dimension, and let $G$ denote a finitely generated group acting by automorphisms on $X$. Then either $G$ stabilizes a clique in $X$ or some element of $G$ is hyperbolic, in which case it has infinite order.
	\emthm

Note that, if a Helly graph has a bound on the cardinality of its cliques, then it has finite combinatorial dimension.

\mk

We deduce an immediate consequence regarding torsion subgroups of Helly groups.

	\bmcor
		Let $G$ be a Helly group, that is a group acting geometrically on a Helly graph. Any torsion subgroup of $G$ is finite.
	\emcor

	Let us state here just one corollary of the result. It concerns a notion of automatic continuity, having itself origins e.g.\ in the theory of Polish spaces. It is a direct (but non-obvious) consequence of Theorem~\ref{thm:helly_elliptic} and \cite{keppeler2021automatic}.
	
	\bmcor
		Let $G$ be a Helly group, that is a group acting geometrically on a Helly graph. Any group homomorphism $\varphi \colon L \to G$ from a locally compact group $L$ is continuous.	\emcor

According to Marquis (see~\cite[Conjecture~1]{Marquis2013}), we also deduce the following for Euclidean buildings.

\bmcor
Let $G$ denote a locally compact group such that its group of components $G/G^0$ is compact. Let $\Delta$ denote a Euclidean building of type $\tilde{A}$, $\tilde{B}$, $\tilde{C}$ or $\tilde{D}$.

Then any measurable action of $G$ on $\Delta$ fixes a point in $\Delta$. In other words, $G$ has Property (FB) for the class of Euclidean buildings of type $\tilde{A}$, $\tilde{B}$, $\tilde{C}$ or $\tilde{D}$.
\emcor

\mk

The arguments used in the proof Theorem~\ref{thm:helly_elliptic} rely on a new notion of clique-paths, see Section~\ref{sec:clique_paths} for the precise definition. The clique-paths are somewhat similar to normal cube-paths in CAT(0) cube complexes studied by Niblo and Reeves (see~\cite{nibloreeves:groups}) and to normal clique-paths studied by Chalopin et al. (see~\cite{Helly}), in both cases used in the proof of biautomaticity. They are also related to directed geodesics in systolic complexes (see~\cite{JanuszkiewiczSwiatkowski2006}) and to tight geodesics in the curve complex (see~\cite{bowditch_tight}).

However, already in the case of a generic translation of the square grid $\Z^2$, no such normal path will be invariant. Our clique-paths have the advantage of being flexible enough to include such invariant axes (see Proposition~\ref{haettel_automorphisms_helly} for a precise statement). On the other hand, they are sufficiently rigid that, even though they are locally defined, they ensure that any global path is geodesic. This key local-to-global property can be summed up in the following result, see Theorem~\ref{thm:local_to_global_clique_path} for the precise statement.

\bmthm[Local clique-paths are globally geodesic] \label{thm:loc_cliq-pat}\

Let $X$ be a Helly graph, and assume that a bi-infinite sequence of cliques $(\sigma_n)_{n \in \Z}$ is such that for all $n \in \Z$, the sequence $(\sigma_n,\sigma_{n+1},\sigma_{n+2})$ is a clique-path. Then, for any $n,m \in \Z$, we have $d(\sigma_n,\sigma_m) \geq |n-m|$. 
\emthm

Note that, to our knowledge, such a construction is new even for CAT(0) cube complexes.

\mk

The other main ingredient in the proof of Theorem~\ref{thm:helly_elliptic} is the following result about linear growth of orbits in a general injective metric space.

\bmthm[Linear orbit growth] \label{mthm:linear_orbit_growth}\

Let $X$ denote an injective metric space, and let $G,H$ denote elliptic groups of isometries of $X$. If $x \in X^G$ is such 
that $d(x,X^H)=d(X^G,X^H)=L$, there exists a sequence $(g_n)_{n \geq 1}$ in $(GH)^n$ such that
$$\forall n \geq 1, d(g_n \cdot x,x) = 2nL.$$
\emthm

We believe this result could be of independent interest.

\mk

\textbf{Organization of the article:} 
In the following Section~\ref{s:motiv}, we formulate the Conjecture being a precise version of the Meta-Conjecture, we provide motivations and list related results.
The equivalence of Theorems~\ref{thm:locally_elliptic} and \ref{thm:helly_elliptic} is shown in Section~\ref{sec:Helly}. There we also remind basics on Helly graphs and show that Theorem~\ref{thm:helly_elliptic} implies Corollary~\ref{cor:fixcor}.. 
In Section~\ref{sec:Helly}, we review Helly graphs, elliptic automorphism groups and subdivisions of Helly graphs. In Section~\ref{sec:clique_paths}, we define the clique-paths between pairs of cliques in a Helly graph, and prove that local clique-paths are globally geodesic. In Section~\ref{sec:linear_orbit_growth}, we prove the linear growth of orbits in injective metric spaces. In Section~\ref{sec:locally_elliptic}, we use the clique-paths and the linear growth to study locally elliptic actions on Helly graphs and we prove Theorem~\ref{thm:helly_elliptic}. In Section~\ref{s:sc}, we prove Theorem~\ref{thm:systolic_elliptic} about locally elliptic actions on systolic complexes and conclude Theorem~\ref{thm:C(13)_elliptic}. 

\mk

\textbf{Acknowledgments:} 
We would like to thank Jack Button, Victor Chepoi, Sami Douba, Fran\c{c}ois Fillastre, Elia Fioravanti, Anthony Genevois, Hiroshi Hirai, Nima Hoda, Beno{\^ i}t Kloeckner, Vladimir Kovalchuk, Linus Kramer, Urs Lang, Harry Petyt, Betrand R\'{e}my, Olga Varghese and Constantin Vernicos for interesting discussions and remarks on earlier versions of the article. We also thank anonymous referees which helped improve the presentation of the article.

\tableofcontents	

	\section{Conjectures, motivations, and state of the art}
\label{s:motiv}

\subsection{Conjectures}
\label{s:conj}

We now formulate an actual conjecture, being a specification of the Meta-Conjecture. In order to do this we must specify the `nonpositive curvature', and `finite dimensional' assumptions. In what follows, by `dimension'
we mean the maximum  of (topological) dimensions of cells.

\begin{definition}[Finite dimensional NPC spaces]
	\label{d:NPC}
	The class $\NPC$ consists of: 
	\begin{enumerate}
		\item \label{d:NPC2} finite dimensional \catz complexes, cf.\ e.g.\ \cite{BriHaf1999};
		\item \label{d:NPC3} finite dimensional complexes with a convex geodesic bicombing, \cite{Lang2013,DesLang2015};
		\item \label{d:NPC4} finite dimensional complexes with an injective metric \cite{AroPan1956,Isbell64,Dress1984,Lang2013};
		\item \label{d:NPC5} uniformly locally finite Gromov-hyperbolic graphs,
		\item \label{d:NPC6} uniformly locally finite weakly modular graphs \cite{CCHO};
		\item \label{d:NPC7} systolic complexes \cite{Chepoi2000,JanuszkiewiczSwiatkowski2006};
		\item \label{d:NPC8} finite dimensional bucolic complexes \cite{Bresaretal2013};
		\item \label{d:NPC9} finite dimensional weakly systolic complexes \cite{ChepoiOsajda};
		\item \label{d:NPC10} finite dimensional quadric complexes \cite{HodaQuadric};
		\item \label{d:NPC11} graphical $C(6)$, $C(4)$-$T(4)$, and $C(3)$-$T(6)$ small cancellation complexes \cite{OsaPry2018,Helly};
		\item \label{d:NPC12} Salvetti complexes of Artin groups;
		\item \label{d:NPC13} Deligne complexes of Artin groups;
	\end{enumerate}
\end{definition}

\begin{conjecture}
	Let $G$ be a finitely generated group acting by automorphisms on a complex $X\in \NPC$. If the action is locally elliptic then it is elliptic. In particular, if $G$ is a torsion group then the action is elliptic.
\end{conjecture}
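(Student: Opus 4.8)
The plan is to prove the Conjecture one class at a time, in each case feeding the action into the engine of Theorem~\ref{thm:helly_elliptic} (equivalently Theorem~\ref{thm:main_elliptic}). That theorem gives a clean dichotomy for a finitely generated group $G$ acting on a \emph{locally finite Helly graph of finite combinatorial dimension}: either $G$ stabilizes a clique, or $G$ contains an infinite-order hyperbolic element. By the classification of Theorem~\ref{mthm:classification_isometries} a hyperbolic element has a quasi-isometrically embedded, hence unbounded, orbit, so it cannot occur under a locally elliptic action; the dichotomy therefore collapses to the clique-stabilizing alternative, which is exactly ellipticity (and a torsion group is automatically locally elliptic). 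Thus for each class $\mathcal C\subseteq\NPC$ the problem reduces to two geometric inputs: (i) build, equivariantly for $\mathrm{Aut}(X)$, a locally finite Helly graph carrying the action, and (ii) bound its combinatorial dimension.

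For the finite-dimensional, combinatorially defined members this is largely available. Finite-dimensional injective metric spaces and complexes with a convex geodesic bicombing (items~\ref{d:NPC4},~\ref{d:NPC3}) are the continuous avatars of Helly graphs and discretize, via the orthoscheme and Helly-subdivision description of Theorem~\ref{mthm:orthoscheme_complex}, to locally finite Helly graphs of the same finite combinatorial dimension. Finite-dimensional \catz cube complexes (item~\ref{d:NPC2}) admit a Helly thickening; the finite-dimensional weakly systolic and bucolic complexes (items~\ref{d:NPC9},~\ref{d:NPC8}), the quadric complexes (item~\ref{d:NPC10}), and the graphical $C(6)$/$C(4)$-$T(4)$/$C(3)$-$T(6)$ small cancellation complexes (item~\ref{d:NPC11}) are Helly, or carry a finite-dimensional Helly companion, by the results gathered in~\cite{Helly,CCHO,Bresaretal2013,ChepoiOsajda,HodaQuadric}. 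In each case uniform local finiteness passes to the graph and the topological dimension bounds the combinatorial dimension, so Theorem~\ref{thm:helly_elliptic} applies — this is the content of Corollary~\ref{cor:fixcor}. The uniformly locally finite Gromov-hyperbolic graphs (item~\ref{d:NPC5}) fit the same pattern once one recalls, after Lang~\cite{Lang2013}, that their injective hull is finite-dimensional, with a bound depending only on the hyperbolicity and local-finiteness constants.

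The one class that need not be finite-dimensional — the systolic complexes (item~\ref{d:NPC7}) — falls outside Theorem~\ref{thm:helly_elliptic}, and here I would run the second engine of the paper directly: the local-to-global property of clique-paths (Theorem~\ref{thm:loc_cliq-pat}) combined with the linear orbit growth of Theorem~\ref{mthm:linear_orbit_growth}, exactly as in the proof of Theorem~\ref{thm:systolic_elliptic}. In the high-systolicity regime this manufactures, from a putative non-elliptic element, an invariant bi-infinite geodesic whose behaviour contradicts local ellipticity. Pushing this argument from the $18$-systolic case to all systolic complexes, and dropping the strong-rigidity hypothesis, is itself one of the genuine gaps flagged after Theorem~\ref{thm:systolic_elliptic}.

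The crux — and the reason this remains a Conjecture rather than a theorem — is supplying input (ii), finite combinatorial dimension, where no bound is known, and input (i), a Helly model, where none is yet constructed. Concretely: general (non-Euclidean, non-hyperbolic) buildings are covered only conditionally on Conjecture~\ref{conj:buildings}; the Deligne complexes (item~\ref{d:NPC13}) and the Salvetti complexes beyond FC-type (item~\ref{d:NPC12}) have no established Helly structure; and, most tellingly, a general finite-dimensional \catz complex (item~\ref{d:NPC2}) is not known to admit any finite-dimensional Helly companion carrying $\mathrm{Aut}(X)$ — which is precisely why Theorem~\ref{mthm:helly_cat0} must instead assume the orthoscheme complex is CAT(0) and argue through its visual boundary and~\cite{caprace_lytchak}. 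I therefore expect the decisive step to be a uniform construction of finite-dimensional Helly (or injective) models for these remaining classes; absent that, the Conjecture is reachable only piecewise, as in Corollaries~\ref{cor:fixcor} and~\ref{cor:fixcorB}.
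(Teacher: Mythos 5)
The statement you were asked to prove is, in the paper itself, an open conjecture: the authors never prove it for the whole class $\NPC$, and your proposal --- correctly --- does not claim to either. Your reconstruction of the partial-results strategy matches the paper's actual route: feed each tractable class into Theorem~\ref{thm:helly_elliptic} by producing a locally finite Helly model of finite combinatorial dimension (this is exactly how Corollary~\ref{cor:fixcor} is proved, via the known Helly structures for Euclidean buildings of types $\widetilde{A}$--$\widetilde{D}$, uniformly locally finite hyperbolic graphs, graphical $C(4)$-$T(4)$ complexes, quadric complexes and FC-type Salvetti complexes), handle the systolic case separately by the clique/disk-diagram argument of Theorem~\ref{thm:systolic_elliptic} with its extra hypotheses, and leave the remainder open. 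Your identification of the core obstruction --- no finite-dimensional, locally finite Helly companion is known for general CAT(0) complexes, Deligne complexes, Artin groups beyond FC type, or buildings beyond the listed Euclidean types (whence Conjecture~\ref{conj:buildings}) --- is precisely the paper's own assessment, as is your observation that Theorem~\ref{mthm:helly_cat0} sidesteps this by an added CAT(0) hypothesis and the results of~\cite{caprace_lytchak}.

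Two cautions, since you place several classes in the ``done'' column that the paper does not. First, you assert that items~(\ref{d:NPC3}) and~(\ref{d:NPC4}) (complexes with convex bicombings or injective metrics) ``discretize \dots to locally finite Helly graphs of the same finite combinatorial dimension.'' The paper establishes nothing of the sort: Theorem~\ref{mthm:orthoscheme_complex} starts from a Helly graph and describes its injective hull, not conversely, and local finiteness of any discretization is exactly the kind of finiteness that can fail (compare the horoball example in Section~\ref{s:relres}, which shows the dimension and finiteness hypotheses are sharp). Second, you fold the weakly systolic, bucolic, and full graphical $C(6)$/$C(3)$-$T(6)$ classes (items~(\ref{d:NPC8}), (\ref{d:NPC9}), (\ref{d:NPC11})) into the cases settled by the cited Helly constructions; Corollary~\ref{cor:fixcor} covers only the $C(4)$-$T(4)$ and quadric cases among these, and for $C(6)$-type complexes the only result in the paper is Theorem~\ref{thm:C(13)_elliptic}, obtained through the systolic machinery under a $C(18)$ hypothesis, not through any Helly companion. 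Also note that item~(\ref{d:NPC2}) is general finite-dimensional CAT(0) complexes, not cube complexes: your later paragraph treats this correctly, but your earlier sentence attributing a ``Helly thickening'' to item~(\ref{d:NPC2}) conflates the two. None of these slips changes your overall conclusion, which is the right one: at present the Conjecture is reachable only piecewise, exactly as in Corollaries~\ref{cor:fixcor} and~\ref{cor:fixcorB}.
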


\mk

\noindent
{\bf Remarks.}
\noindent
{\bf (A)} In the \catz setting the conjecture concerning the part on torsion groups appears for the first time in 
\cite[Conjecture 1.5]{NOP-D}. Then the \catz version of the Conjecture appears in \cite{Oberwolfach} (in a form of a question). Also in \cite{Oberwolfach} the Helly version of the conjecture appears for the first time. For appearances of related questions and conjectures, see the following Subsection~\ref{s:motiv2}.

\noindent
{\bf (B)} As noted after Meta-Conjecture in Section~\ref{s:metaconj} the assumptions on finite generation and finite dimensionality are essential. In Subsection~\ref{s:motiv2} item (VIII) and in Subsection~\ref{s:relres} we explain why restricting to the combinatorial setting of automorphisms of complexes, rather than (arbitrary) isometries of metric spaces is important.

\noindent
{\bf (C)}	For item (\ref{d:NPC6}) we do not know a general definition of a `weakly modular complex', that is, a contractible complex with $1$-skeleton being a weakly modular graph -- see \cite[Question 9.20]{CCHO}.
	Therefore, we make a (much stronger) assumption of uniform local finiteness implying finite dimensionality of such potential complex.
	
\noindent
{\bf (D)}	For item (\ref{d:NPC7}) we do not need to assume finite dimensionality. Although systolic complexes
	can be infinitely dimensional, they behave in many ways as (asymptotically) $2$-dimensional objects (see e.g.\ \cite{JS-filling,Osajda-Ideal,OS-AHA,Osajda-simphatic}), and we
	believe this nature makes the Conjecture valid for them. In Theorem~\ref{thm:systolic_elliptic} we provide a partial justification for this belief.  

\noindent
{\bf (E)} There are many more spaces that should be added to the class $\NPC$. The reader is welcomed to add her or his favorite 
NPC-like spaces to the list.

\subsection{Motivations}
\label{s:motiv2}

%

The Meta-Conjecture and Conjecture are related to the following notions and properties, and are generalizations
and unifications of the following (open) questions and conjectures:	

\medskip

\noindent
{\bf{(I)}} A conjecture stating that nonpositively curved groups satisfy the \emph{Tits Alternative}, that is, their finitely generated subgroups are either solvable or contain non-abelian free groups. The statement is quite obvious for Gromov hyperbolic groups. Usually the conjecture is stated for CAT(0) groups; see e.g.\ \cite[Quest~2.8]{Bestvina-prob2},\cite{Bridson-ICM},\cite[Quest~7.1]{Bridson-prob},\cite[Prob~12]{FHT-prob},\cite[Sec.\ 5]{Caprace2014}. The question of ellipticity of locally elliptic actions may be seen as a first step towards the Tits Alternative. For a recent account on the Tits Alternative in the nonpositively curved setting see e.g.\ \cite{OsPrz21,OsPrz21b}. 

\medskip

\noindent
{\bf{(II)}}
A conjecture stating that there are no infinite torsion subgroups of $\mathrm{CAT}(0)$ groups, that is,
groups acting geometrically on $\mathrm{CAT}(0)$ spaces; see e.g.\ \cite{Swenson1999}, \cite[Question 2.11]{Bestvina-prob}, \cite[p.\ 88]{Xie2004}, \cite[Question 8.2]{Bridson-prob}, \cite[Problem 24]{Kapovich-prob}, \cite[\S~IV.5]{Caprace2014}. In \cite[Conjecture 1.5]{NOP-D} the conjecture that every action of a finitely generated torsion group on a finitely dimensional CAT(0) complex is elliptic was formulated for the first time.

\medskip

\noindent
{\bf{(III)}}
A conjectural equivalence of `locally elliptic' and `elliptic' actions, for actions 
on buildings, see e.g.\ \cite{Parreau2003}, \cite[Conjecture 3.22]{Marquis2013}, \cite[Conjecture 1.2]{Marquis2015}. This is related e.g.\ to 
questions concerning bounded subgroups of Kac-Moody groups \cite{Caprace-MAMS}.

\medskip

\noindent
{\bf{(IV)}}
Numerous questions in Algebraic Geometry concerning regularizations or linearizations of certain subgroups of 
the groups of birational transformations of projective surfaces (see e.g.\ \cite[Sec.\ 5.3]{LonUre21}, \cite{Cantat2011}, \cite{Favre2010})) or subgroups of automorphism groups of Aut$({k}^n)$ (see e.g.\ \cite[Introduction]{LamPrz}).

\medskip

\noindent
{\bf{(V)}}
The question of automatic continuity for groups; see e.g.\ \cite{keppeler2021automatic}. Except the group theory the question has origins e.g.\ in the theory of Polish spaces.

\medskip

\noindent
{\bf{(VI)}}
Property (FA) of triviality of group actions on trees introduced by Serre in \cite{Serre73,Serre77,Serre_Trees} and generalized to $\mathbb{R}$-trees in \cite{MorganShalen1984}. ($\mathbb{R}$-)trees are $1$-dimensional
nonpositively curved (more precisely, CAT(0)) spaces. A generalization to higher dimensions is Farb's property FA$_n$ \cite{Farb2009}.

\medskip

\noindent
{\bf{(VII)}}
Kazhdan's property (T), which (for discrete groups) is equivalent to the property of acting trivially on the
Hilbert space, being itself $CAT(0)$. Notably, Kazhdan's property was introduced in order to prove finite generation of some groups.

\medskip

\noindent
{\bf{(VIII)}}
Kaplansky's question on unitarizability of linear groups consisting of conjugates of unitary matrices and, strongly related, Auerbach's problem on boundedness of cyclically bounded groups of matrices,  see e.g.\ \cite{Merz1966,Bass1980,Waterman1988,breuillard_fujiwara}, and some details in Subsection~\ref{s:relres} below.

\medskip

\noindent
{\bf{(IX)}}
General questions concerning the structure and properties of infinite torsion groups. In particular, (not only) we believe (cf.\ e.g.\ \cite{Niblo_1998}) this is related to the following fundamental question about torsion groups: Are there finitely presented infinite torsion groups?

\medskip

\noindent
{\bf{(X)}}
Question of linarity of groups, that is, of existence of faithful finite dimensional representations.
By Schur's theorem \cite{Schu1911}, finitely generated linear torsion groups are finite. (It should be noticed that Tits Alternative takes its name from a result by Tits for linear groups \cite{Tits1972}.)

\subsection{Discussion of the results and state of the art}
\label{s:relres}

The property of trivial actions on nonpositively curved spaces is related to the (FA) property and Kazdan's property (T) -- both, trees and Hilbert spaces are CAT(0).
In particular, by Serre's result \cite{Serre_Trees} groups acting only trivially on NPC spaces are finitely generated. 
Finitely generated torsion groups have property (FA) by \cite{Serre_Trees}. 
However, such groups, even of bounded exponent (that is, Burnside groups) can act without fixed points on infinite dimensional \catz cube complexes (hence, on Hilbert spaces) by \cite{OsajdaDuke}. 
Groups with Kazhdan's property (T) have Property (FA) by \cite{Watatani1982}. However, there are uniform lattices in isometry groups of affine buildings with Kazhdan's property (T).
\mk

Groups (not necessarily finitely generated) acting locally elliptically by isometries on the Euclidean spaces $\mathbb{E}^2,\mathbb{E}^3$ or hyperbolic spaces $\mathbb{H}^2,\mathbb{H}^3$ fix points, cf.\ \cite{Waterman1988,breuillard_fujiwara}. Waterman \cite{Waterman1988} showed the same for $\mathbb{H}^4$. 
He also proved that, for every $n$, a torsion group acting by isometries on $\mathbb{H}^{n}$ fixes a point, thus establishing a part of the Conjecture for complexes isometric to $\mathbb{H}^{n}$.

However, answering in the negative Auerbach's problem on boundedness of cyclically bounded matrix groups Merzlyakov \cite{Merz1966} constructed a two-generated group of locally elliptic isometries of $\mathbb{E}^4$ acting without a fixed point. Similar examples of isometries of $\mathbb{E}^4$ have been independently discovered by Bass \cite{Bass1980} answering a question of Kaplansky, and by Waterman \cite{Waterman1988}, who also constructed an example of a two-generated group acting locally elliptically on 
$\mathbb{H}^5$ without a fixed point. Breuillard-Fujiwara \cite{breuillard_fujiwara} extended such examples to all Euclidean spaces $\mathbb{E}^{2n}$, for $n=2,3,4,\ldots$.

\mk

We believe that the results from the current paper make a significant progress towards the Conjecture. However, even in the \catz setting the Conjecture is widely open. We believe that the case of (Gromov) hyperbolic graphs, as in Corollary~\ref{cor:fixcor}(\ref{cor:fixcor2}) has been known to the experts, and anyway could be established by more standard (that is, not via Helly techniques) methods. However we did not find such a statement in the literature (and we received signals from some colleagues that it was not known to them before). Button \cite{Button} proved the Conjecture in the case of quasitrees, not necessarily being complexes and without further (besides being a quasitree) assumptions on finite dimensionality. 
Observe that any group can act without fixed points on a hyperbolic combinatorial horoball (see e.g.\ \cite{GroMan}) over its Cayley graph.
Such horoball is locally finite, but not uniformly locally finite.
This construction shows also that the assumption on finite dimensionality in our Theorem~\ref{thm:helly_elliptic} cannot be dropped. 
\mk

Imposing some further restrictions on the groups acting, special cases of the Conjecture were established e.g.\ for general
\catz spaces by Caprace-Monod \cite[Lemma 8.1]{CaMo2009}, and for buildings by Parreau \cite[Corollaire 2]{Parreau2000}, \cite[Corollaire 3]{Parreau2003}, and by Marquis \cite{Marquis2013,Marquis2015}. Leder-Varghese~\cite{leder_varghese} observed that it follows from an earlier work by Sageev~\cite{Sageev1995} that the Conjecture holds in the case of \catz cube complexes. In fact, for \catz cube complexes the Conjecture holds also in the infinite dimensional case when there are no infinite cubes \cite{GenLonUre21}. 
Norin-Osajda-Przytycki~\cite{NOP-D} proved the Conjecture for $2$-dimensional \catz complexes with additional mild assumptions.
In particular they proved the conjecture for all $2$-dimen\-sio\-nal (discrete) buildings, and they proved that locally elliptic actions of finitely generated torsion groups on $2$-dimen\-sio\-nal \catz complexes are elliptic. 
Schillewaert-Struyve-Thomas~\cite{schillewaert2020fixed} extended such results beyond the  discrete case, proving them for all $\widetilde{A}_1\times \widetilde{A}_1$, $\widetilde{A}_2$ and $\widetilde{C}_2$ buildings.
\mk

Our Corollary~\ref{cor:fixcor}(\ref{cor:fixcor3}) proves the Conjecture~(\ref{d:NPC11}) in the case of locally finite graphical \cftf complexes. Duda~\cite{DudaC4T4} obtained similar results for not necessarily locally finite classical \cftf complexes and not necessarily finitely generated groups acting on them. In particular, he showed that (not necessarily finitely generated) torsion subgroups of groups with (classical) \cftf small cancellation presentations are finite. In the `hyperbolic' case of $C'(1/4)$-$T(4)$ complexes the conjecture was proved in \cite[Theorem 8.15]{Genev21}.

Regarding Conjecture~\ref{conj:buildings}, let us mention that for some Coxeter groups the combinatorial dimension was computed in \cite{Engberding}.

%
%

\section{Helly graphs and combinatorial dimension} \label{sec:Helly}

A connected graph $X$ is called \emph{Helly} if any family of pairwise intersecting combinatorial balls of $X$ has a non-empty global intersection. We will consider $X$ as its vertex set, and we will endow $X$ with induced graph metric. We refer the reader to~\cite{Helly} for a presentation of Helly graphs and Helly groups.

	One may think of Helly graphs as a very nice class of nonpositively curved, combinatorially defined spaces. Surprisingly enough, many nonpositive curvature metric spaces and groups have a very close relationship to Helly graphs or their non-discrete counterpart, injective metric spaces.
	
	For instance, the thickening of any CAT(0) cube complex is a Helly graph (see~\cite{bandelt_vandevel_superextensions}, and also~\cite[Corollary~3.6]{hruskawise:packing}). Lang showed that the any Gromov hyperbolic group acts properly cocompactly on the Helly hull of any Cayley graph (see~\cite{Lang2013,Helly}). Huang and Osajda proved that any weak Garside group and any Artin group of type FC has a proper and cocompact action on a Helly graph (see~\cite{huang_osajda_helly}). Osajda and Valiunas proved that any group that is hyperbolic relative to Helly groups is Helly (see~\cite{osajda_valiunas}). Haettel, Hoda and Petyt proved that any hierarchically hyperbolic group, and in particular any mapping class group of a surface, has a proper and cobounded action on an injective metric space, see~\cite{haettel_hoda_petyt}.

\mk

Concerning Euclidean buildings, recall the following statement.

\bthm[Hirai, Chalopin et al, Haettel] \label{thm:buildings_helly}
The thickening of any Euclidean building of type $\tilde{A}$ extended, $\tilde{B}$, $\tilde{C}$ or $\tilde{D}$ is Helly.
\ethm

Hirai, and Chalopin et al. proved the case of Euclidean buildings of type $\tilde{A}$ extended and $\tilde{C}$, see~\cite{hirai_uniform_modular} and \cite{CCHO}. In~\cite{haettel_injective_buildings} and \cite{haettel_helly_kpi1}, Haettel proved the statement for all Euclidean buildings of type $\tilde{A}$ extended, $\tilde{B}$, $\tilde{C}$ or $\tilde{D}$. There is an analogous result for classical symmetric spaces, see~\cite{haettel_injective_buildings} for a precise statement.

\mk

The analogy between hyperbolic groups, CAT(0) groups and Helly groups is quite rich, as the following survey of results show.

\bthm[Analogies hyperbolic / CAT(0) / Helly]
Assume that a finitely generated group $G$ acts properly and cocompactly by isometries on a hyperbolic space, CAT(0) space or a Helly graph. Then
\bit
\item $G$ is semi-hyperbolic in the sense of Alonso-Bridson (see~\cite{alonsobridson:semihyperbolic} and \cite{Lang2013}), which has numerous consequences (see~\cite[III.$\Gamma$.4]{BriHaf1999}).
\item $G$ has finitely many conjugacy classes of finite subgroups (see~\cite{BriHaf1999} and \cite{Lang2013}).
\item Asymptotic cones of $G$ are contractible (see~\cite[Theorem~1.5]{Helly}).
\item $G$ admits an EZ-structure (see~\cite{Bestvina96} and \cite[Theorem~1.5]{Helly}).
\item $G$ satisfies the Farrell-Jones conjecture (see~\cite{KasRup17} and \cite[Theorem~1.5]{Helly}).
\item $G$ satisfies the coarse Baum-Connes conjecture (see~\cite{FukOgu20} and \cite[Theorem~1.5]{Helly}).
\item $G$ is finitely presented and has at most quadratic Dehn function (see~\cite[Proposition~III.$\Gamma$.1.6]{BriHaf1999} and \cite[Theorem~1.5]{Helly}).
\item $G$ has type $F_\infty$ (see~\cite[Proposition~II.5.13]{BriHaf1999} and \cite[Theorem~1.5]{Helly}).
\eit 
\ethm

Recall that a geodesic metric space is called \emph{injective} if any family of pairwise intersecting closed balls has a non-empty global intersection. We refer the reader to~\cite{Lang2013} for a presentation of injective metric space, and also the following result of Isbell.

\bthm[\cite{Isbell64}]
Let $X$ denote a metric space. Then there exists an essentially unique minimal injective space $EX$ containing $X$, called the injective hull of $X$.
\ethm

Given any metric space $X$, the \emph{combinatorial dimension} $\operatorname{combdim}(X)$ of $X$ is the topological dimension of the injective hull $EX$ of $X$ (this has been defined by Dress, see~\cite{Dress1984}). This is a quantity that is surprisingly subtle to handle. There is another explicit definition of the combinatorial dimension, which we recall below.

\bthm \cite[Theorem~4.1]{DesLang2015}
A metric space $X$ has combinatorial dimension at most $n$ if and only if, for set $Z \subset X$ with cardinality $2(n+1)$, and for any fixed point free involution $i:Z \ra Z$, there exists a fixed point free bijection $j:Z \ra Z$ distinct from $i$ such that:
$$\sum_{z \in Z} d(z,i(z)) \leq \sum_{z \in Z} d(z,j(z)).$$
\ethm

One very simple remark is that, if a metric space $X$ is isometrically embedded in $Y$, then $\operatorname{combdim}(X) \leq \operatorname{combdim}(Y)$.

\mk

For a Helly graph $X$, the combinatorial dimension of $X$ is the combinatorial dimension of its vertex set, endowed with the combinatorial distance. We now describe more explicitly the combinatorial dimension of a Helly graph $X$. Say that a clique in $X$ is a \emph{round clique} if it is an intersection of balls of $X$. Notice that round cliques are partially ordered by inclusion.

\bthm \cite[Corollary~4.2]{haettel_automorphisms_helly} \label{ref:thm_combdim_cliques}
Let $X$ denote a Helly graph. Then the combinatorial dimension of $X$ is equal to the length of the longest chain of round cliques of $X$.
\ethm

For instance, any tree (even locally infinite) has combinatorial dimension $1$. More generally, the thickening of any $N$-dimensional CAT(0) cube complex has combinatorial dimension $N$.

\medskip

The proof that Corollary~\ref{cor:fixcor} follows from Theorem~\ref{thm:helly_elliptic} is now quite clear, as many spaces embed naturally in Helly graphs.

\noindent
{\emph{Proof of Corollary~\ref{cor:fixcor}.} The result follows from Theorem~\ref{thm:helly_elliptic}, and the aforementioned constructions of Helly graphs associated with the corresponding complexes for: buildings \cite{CCHO,hirai_uniform_modular,haettel_injective_buildings,haettel_helly_kpi1}, hyperbolic graphs \cite{Lang2013,Helly}, small cancellation complexes \cite{Helly}, Salvetti complexes \cite{huang_osajda_helly} and Deligne complexes \cite{haettel_huang_garside_artin_product_Z}.
\hfill $\square$

%

\section{Subdivisions of Helly graphs and elliptic subgroups} \label{sec:subdivisions}

We gather here results from~\cite{haettel_automorphisms_helly}, which we will be useful for the study of elliptic automorphism groups of Helly graphs. Let us start with simple equivalent characterizations of  elliptic automorphisms.

\bpro \cite[Proposition~5.1]{haettel_automorphisms_helly} \label{pro:characterizations_elliptic}
Let $G$ denote a group of automorphisms of a Helly graph $X$. The following are equivalent:
\ben
\item $G$ stabilizes a clique in $X$,
\item $G$ fixes a point in the injective hull $EX$ of $X$ and
\item $G$ has a bounded orbit in $X$.
\een
Such a group is called an \emph{elliptic} group of automorphisms of $X$.
\epro

In the case where the combinatorial dimension is finite, one can say more. More precisely, there is a nice way to describe a Helly subdivision of the injective hull.

\bthm \cite[Theorem~3.1]{haettel_automorphisms_helly} \label{thm:helly_subdivision}
Let $X$ denote a Helly graph with combinatorial dimension $N$. There exists a Helly graph $X'_N$ of combinatorial dimension $N$, called the $N^\text{th}$ Helly subdivision of $X$, with a homothetic embedding $\iota:X \ra X'_N$, such that the following hold.
\bit
\item The map $\iota$ extends to a homothetic bijection between the injective hulls $E(X)$ and $E(X'_N)$.
\item There exists a group embedding from $\operatorname{Aut}(X)$ into $\operatorname{Aut}(X'_N)$ such that $\iota$ is equivariant.
\item A group $G$ of automorphisms of $X$ is elliptic if and only if $G$ fixes a vertex of $X'_N$.
\eit
\ethm

In fact, as detailed in \cite{haettel_automorphisms_helly}, the vertex set of the first Helly subdivision $X'_1$ coincides with the set of all round cliques of $X$.

\mk

Moreover, we can use these subdivisions to study pairs of elliptic subgroups.

\bpro \cite[Proposition~6.1]{haettel_automorphisms_helly} \label{pro:distance_fixed_points_realized_vertices}
Let $X$ denote a Helly graph with combinatorial dimension $N$. Given two elliptic groups $G,H$ of automorphisms of $X$, there are vertices $x,y \in X'_{2N}$ fixed by $G,H$ respectively, such that $d(x,y)$ is equal to the distance between the fixed point sets in $E(X'_{2N})$ of $G,H$ respectively.
\epro

We can also use the $N^\text{th}$ Helly subdivision to study hyperbolic automorphisms of $X$.

\bpro \label{pro:characterizations_hyperbolic} \cite[Proposition~5.4]{haettel_automorphisms_helly}
Let $g$ denote an automorphism of a Helly graph $X$ with finite combinatorial dimension $N$. The following are equivalent:
\ben[1.]
\item $g$ is hyperbolic, i.e for some vertex $x \in X$, the map $n \in \Z \mapsto g^n \cdot x \in X$ is a quasi-isometric embedding.
\item $g$ has a geodesic axis in the injective hull $EX$ of $X$.
\item There exists a vertex $x$ of the $N^\text{th}$ Helly subdivision of $X'_N$ of $X$ and $L \in \N \bs \{0\}$ such that $\forall n \in \N, d(x,g^n \cdot x)=nL$.
\item $g$ has unbounded orbits in $X$.
\een
\epro

In particular, we see that we have the following dichotomy.

\bcor
Let $X$ denote a Helly graph with finite combinatorial dimension. Then any automorphism of $X$ is either elliptic or hyperbolic.
\ecor

\section{Clique-paths} \label{sec:clique_paths}

We now turn to the description of clique-paths in Helly graphs. They will be a subtle variation of normal clique-paths from~\cite{Helly} and from normal cube paths from~\cite{nibloreeves:groups}. They will be flexible enough to allow any hyperbolic automorphism to admit an invariant clique-path (see Proposition~\ref{pro:hyperbolic_clique_path}), and still allow a local-to-global rigidity (see Theorem~\ref{thm:local_to_global_clique_path}).

\mk

Let $X$ be a graph, and let $\sigma,\tau$ be cliques in $X$. We will say that $\sigma,\tau$ are at \emph{transverse distance} $n \geq 0$, denoted $\sigma \lessgtr \tau = n$, if the following hold:
\bit
\item for every $x \in \sigma$ and $y \in \tau$, we have $d(x,y) \geq n$,
\item there exists $x \in \sigma$ such that $\forall y \in \tau, d(x,y) =n$ and
\item there exists $y \in \tau$ such that $\forall x \in \sigma, d(x,y) =n$.
\eit

Say that a sequence of pairwise disjoint cliques $\sigma_0,\dots,\sigma_n$ is an $L$-\emph{clique-path} if for some $L \geq 1$ the following hold:
\bit
\item For every $0 \leq i < j \leq n$, we have $\sigma_i \lessgtr \sigma_j = (j-i)L$. 
\item For any $1 \leq i \leq n-1$, $\sigma_i$ is a clique satisfying $\sigma_i \lessgtr \sigma_{i+1} = L$ and $\sigma_i \lessgtr \sigma_{i-1} = L$, and $\sigma_i$ is a maximal such clique.
\eit

\begin{figure}
\begin{center}
\begin{tikzpicture}
\def \p {0.05}
\def \pp {0.2}
\def \op {0.1}
\def \gris {black!10}
\def \c {blue}

\draw[fill] (0,0) circle (\p) node(a) {};
\draw[fill] (-1,1) circle (\p) node(b) {};
\draw[fill] (0,3) circle (\p) node(c) {};
\draw[fill] (2,1) circle (\p) node(d) {};
\draw[fill] (2,4) circle (\p) node(e) {};
\draw[fill] (4,3) circle (\p) node(f) {};
\draw[fill] (5,5) circle (\p) node(g) {};
\draw[fill] (7,5) circle (\p) node(h) {};
\draw[fill] (7,4) circle (\p) node(i) {};

\draw[\c,fill opacity=\op,fill=\c] (a.center) -- (b.center) -- (c.center) -- (a.center);
\draw[\c,fill opacity=\op,fill=\c] (d.center) -- (e.center) -- (f.center) -- (d.center);
\draw[\c,fill opacity=\op,fill=\c] (g.center) -- (h.center) -- (i.center) -- (g.center);

\draw (c.center) -- (d.center);
\draw (c.center) -- (e.center);
\draw (c.center) -- (f.center);
\draw (a.center) -- (d.center);
\draw (b.center) -- (d.center);
\draw (d.center) -- (g.center);
\draw (e.center) -- (g.center);
\draw (f.center) -- (g.center);
\draw (f.center) -- (h.center);
\draw (f.center) -- (i.center);

\draw (c.center) circle (\pp);
\draw (d.center) circle (\pp); 
\draw (f.center) circle (\pp); 
\draw (g.center) circle (\pp); 

\node at (0,-0.5) {$\sigma_i$};
\node at (3.7,1.8) {$\sigma_{i+1}$};
\node at (7,3.5) {$\sigma_{i+2}$};

\end{tikzpicture}
\end{center}
\caption{A $1$-clique-path $\sigma_i,\sigma_{i+1},\sigma_{i+2}$, where vertices connecting to neighbouring cliques are circled.}
\label{fig:clique_path}
\end{figure}
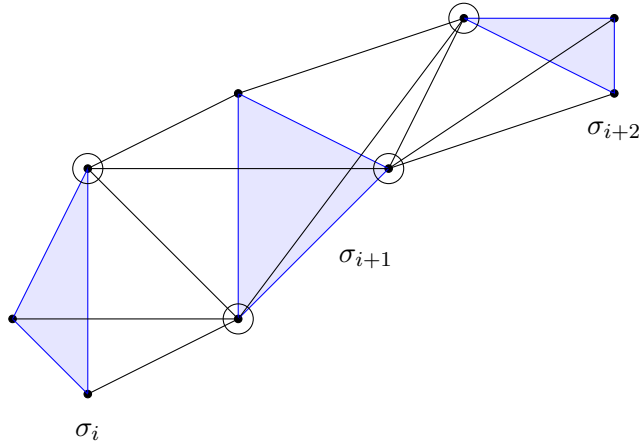

\mk

Say that a sequence of cliques $\sigma_0,\dots,\sigma_n$ is a \emph{local $L$-clique-path} if, for any $0 \leq i \leq n-2$, the sequence $\sigma_i,\sigma_{i+1},\sigma_{i+2}$ is an $L$-clique-path. In other words:
\bit
\item For any $0 \leq i \leq n-1$,  $\sigma_i \lessgtr \sigma_{i+1} = L$.
\item For any $1 \leq i \leq n-1$, $\sigma_i$ is a clique satisfying $\sigma_i \lessgtr \sigma_{i+1} = L$ and $\sigma_i \lessgtr \sigma_{i-1} = L$, and $\sigma_i$ is a maximal such clique.
\item For any $0 \leq i \leq n-2$, $\sigma_i \lessgtr \sigma_{i+2} = 2L$.
\eit

Observe that an $L$-clique-path is a {local $L$-clique-path}. In Theorem~\ref{thm:local_to_global_clique_path} below we will prove a version of a converse statement.

If $L=1$, we will also simply say (local) clique-path. See Figure~\ref{fig:clique_path}.

\mk

Let us first remark that clique-paths involve only round cliques as defined in Section~\ref{sec:Helly}, i.e. intersections of balls.

\blem
Let $X$ denote a graph, $L \geq 1$, and $\sigma_0,\sigma_1,\sigma_2$ an $L$-clique path in $X$. Then $\sigma_1$ is a round clique, i.e. it is an intersection of balls.
\elem

\bp
By definition of an $L$-clique path, there exist $x_0 \in \sigma_0$ and $x_2 \in \sigma_2$ such that $\sigma_1 \subset B(x_0,L) \cap B(x_2,L)$. By maximality of $\sigma_1$, we deduce that
$$\sigma_1 = B(x_0,L) \cap B(x_2,L) \cap \bigcap_{x \in \sigma_1} B(x,1).$$
Hence $\sigma_1$ is a round clique.
\ep

\mk

\blem \label{lem:existence_path}
Fix a Helly graph $X$ with finite combinatorial dimension. Consider any two cliques $\sigma,\tau$ in $X$ such that $\sigma \lessgtr \tau = nL$, for some integers $n,L \in \N \bs \{0\}$. There exists an $L$-clique-path $\sigma_0=\sigma,\sigma_1,\dots,\sigma_n=\tau$ between $\sigma$ and $\tau$ of length $n$. Moreover, if $x_0 \in \sigma$ and $x_n \in \tau$ are such that $\tau \subset B(x_0,nL)$ and $\sigma \subset B(x_n,nL)$, one can choose a $L$-clique-path $\sigma_0=\sigma,\sigma_1,\dots,\sigma_n=\tau$ such that $\sigma_1 \subset B(x_0,L)$ and $\sigma_{n-1} \subset B(x_n,L)$.
\elem

\bp
We will first prove the statement for $L=1$, and by induction on $n \geq 1$. For $n = 1$ there is nothing to prove.

\mk

By induction, assume that $n \geq 2$, and assume that the statement is true for $n-1$. Consider cliques $\sigma,\tau$ such that $\sigma \lessgtr \tau = n$. Let $x_0 \in \sigma_0=\sigma$ be such that $\sigma_n=\tau \subset B(x_0,n)$, and let $x_n \in \sigma_n$ be such that $\sigma_0 \subset B(x_n,n)$.

The balls $B(t_0,n-1)$, for $t_0 \in \sigma_0$, and $B(x_n,1)$ pairwise intersect, so as $X$ is a Helly graph there exists $x \in \cap_{t_0 \in \sigma_0} B(t_0,n-1) \cap B(x_n,1)$. Now the balls $B(t_n,1)$, for $t_n \in \sigma_n$, $B(x_0,n-1)$ and $B(x,1)$ pairwise intersect, so there exists $y \in \cap_{t_n \in \sigma_n} B(t_n,1) \cap B(x_0,n-1) \cap B(x,1)$. So the clique $\sigma'_{n-1}=\{x,y\}$ satisfies $\sigma'_{n-1} \lessgtr \sigma_0=n-1$ and $\sigma_{n-1} \lessgtr \sigma_n=1$.

\mk

By induction, there exists a clique-path $\sigma'_0,\sigma'_1,\dots,\sigma'_{n-1}$ from $\sigma'_0=\sigma_0$ to $\sigma'_{n-1}$ of length $n-1$. 

\mk

Let $\sigma'_n=\sigma_n$. For any $0 \leq i \leq n-1$,  we have $\sigma'_i \lessgtr \sigma'_{i+1} = 1$. Therefore, since $X$ has finite combinatorial dimension, according to Theorem~\ref{ref:thm_combdim_cliques}, there is a bound on the length of chains of round cliques. So we may consider a maximal family of cliques $\sigma_1,\dots,\sigma_{n-1}$ containing $\sigma'_1,\dots,\sigma'_{n-1}$ respectively, such that for any $1 \leq i \leq n-1$, we have $\sigma_i \lessgtr \sigma_{i-1} = 1$ and $\sigma_i \lessgtr \sigma_{i+1}=1$.

\mk

We will prove that $\sigma_0,\sigma_1,\dots,\sigma_n$ is a $1$-clique-path. For any $0 \leq i < j \leq n$, let $y_i \in \sigma_i$ and $y_j \in \sigma_j$. Since $d(x_0,y_i) + d(y_i,y_j)+d(y_j,x_n) \geq d(x_0,x_n)=n$, we deduce that $d(y_i,y_j) \geq j-i$. So we deduce that $\sigma_i \lessgtr \sigma_j=j-i$.

\mk

Hence $\sigma_0,\sigma_1,\dots,\sigma_{n-1},\sigma_n$ is a $1$-clique-path from $\sigma$ to $\tau$.

\mk

Now consider the case $L \geq 2$, and consider cliques $\sigma,\tau$ such that $\sigma \lessgtr \tau = nL$. According to the case $L=1$, there exists a $1$-clique path $\sigma'_0=\sigma,\sigma'_1,\dots,\sigma'_{nL} = \tau$ from $\sigma$ to $\tau$ of length $nL$. For every $0 \leq i \leq n-1$, we have $\sigma'_{iL} \lessgtr \sigma'_{i+1} = L$. Since there is a bound on the length of chains of round cliques, we may consider a maximal family of cliques $\sigma_1,\dots,\sigma_{n-1}$ containing $\sigma'_{L},\dots,\sigma'_{(n-1)L}$ respectively, such that for any $1 \leq i \leq n-1$, we have $\sigma_i \lessgtr \sigma_{(i-1)L} = L$ and $\sigma_i \lessgtr \sigma_{i+1}=L$. We then argue as before that, for any $0 \leq i < j \leq n$, we have $\sigma_i \lessgtr \sigma_j = (j-i)L$. So $\sigma_0=\sigma,\sigma_1,\dots,\sigma_n=\tau$ is a $L$-clique path from $\sigma$ to $\tau$ of length $n$.
\ep

We will now prove the main local-to-global property of clique-paths, starting with the case of local $1$-clique-paths.

\bpro \label{pro:local_clique_path_between_vertices_L1}
Fix a Helly graph $X$, $n \geq 2$, and consider a local clique-path $\sigma_0,\dots,\sigma_n$ in $X$. Let $x_0 \in \sigma_0$ such that $\sigma_1 \subset B(x_0,1)$, and let $x_n \in \sigma_n$ such that $\sigma_{n-1} \subset B(x_n,1)$. Then $\{x_0\},\sigma_1,\sigma_2,\dots,\sigma_{n-1},\{x_n\}$ is a clique-path.
\epro

\bp
We will prove the statement by induction on $n \geq 2$. For $n=2$ any local clique-path is a clique-path, so fix $n \geq 3$ and assume that the statement holds for $n-1$. Fix a local clique-path $\sigma_0,\dots,\sigma_n$.

\mk

By induction, we only need to prove that $\{x_0\} \lessgtr \sigma_{n-1}=n-1$, $\{x_n\} \lessgtr \sigma_{0}=n-1$ and $d(x_0,x_n)=n$. The first two equalities are consequences of $d(x_0,x_n)=n$: we will prove it.

\mk

By contradiction, assume that $d(x_0,x_n) \leq n-1$. The balls $B(x_0,n-2)$, $B(x_n,1)$, $B(y_{n-2},1)$ for $y_{n-2} \in \sigma_{n-2}$ and $B(y_{n-1},1)$ for $y_{n-1} \in \sigma_{n-1}$ pairwise intersect: since $X$ is a Helly graph, we may consider
$$ z \in B(x_0,n-2) \cap B(x_n,1) \cap \bigcap_{y_{n-2} \in \sigma_{n-2}} B(y_{n-2},1) \cap \bigcap_{y_{n-1} \in \sigma_{n-1}} B(y_{n-1},1).$$
We will prove that $z \in \sigma_{n-1}$.

\mk

The vertex $z$ is adjacent to $\sigma_{n-1}$, let $\sigma'_{n-1}=\sigma_{n-1} \cup z$. Since $\sigma_{n-2} \lessgtr \sigma_n = 2$, we know that $\sigma'_{n-1}$ is disjoint from $\sigma_{n-2}$ and $\sigma_n$.

We have $\sigma'_{n-1} \lessgtr \sigma_{n}=1$ and $\sigma'_{n-1} \lessgtr \sigma_{n-2}=1$. So by maximality of $\sigma_{n-1}$, we deduce that $\sigma'_{n-1}=\sigma_{n-1}$, hence $z \in \sigma_{n-1}$.

\mk

Since $\sigma_{n-2} \subset B(z,1)$, by induction we know that $\{x_0\},\sigma_1,\dots,\sigma_{n-2},\{z\}$ is a clique-path. In particular $d(x_0,z)=n-1$, which contradicts $d(z,x_0) \leq n-2$.

\mk

So we conclude that $d(x_0,x_n)=n$. This implies that $\{x_0\},\sigma_1,\sigma_2,\dots,\sigma_{n-1},\{x_n\}$ is a clique-path.
\ep

We now prove a very similar result for local $L$-clique-paths.

\bpro \label{pro:local_clique_path_between_vertices_Lqcq}
Fix a Helly graph $X$ with finite combinatorial dimension, $L \geq 1$ and $n \geq 2$. Consider a local $L$-clique-path $\sigma_0,\dots,\sigma_n$ in $X$. There exist $x_0 \in \sigma_0$ and $x_n \in \sigma_n$ such that $\sigma_1 \subset B(x_0,L)$, $\sigma_{n-1} \subset B(x_n,L)$ and $\{x_0\},\sigma_1,\sigma_2,\dots,\sigma_{n-1},\{x_n\}$ is a $L$-clique-path.
\epro

\bp
According to Lemma~\ref{lem:existence_path}, for each $0 \leq i \leq n-1$, there exists a $1$-clique-path between $\sigma_i$ and $\sigma_{i+1}$. So there exists a sequence of cliques $\tau_0,\tau_1,\dots,\tau_{nL}$ such that:
\bit
\item For each $0 \leq i \leq n$, we have $\tau_{iL}=\sigma_i$.
\item For each $0 \leq i \leq n-1$, the sequence $\tau_{iL},\tau_{iL+1},\dots,\tau_{(i+1)L}$ is a $1$-clique-path.  
\eit

We will prove that $\tau_0,\tau_1,\dots,\tau_{nL}$ is a local $1$-clique-path.

\mk

We first prove that, for any $1 \leq i \leq n-1$, we have $\tau_{iL-1} \lessgtr \tau_{iL+1} = 2$. Fix $y_{iL-1} \in \tau_{iL-1}$ and $y_{iL+1} \in \tau_{iL+1}$. Let $x_{i-1} \in \sigma_{i-1}$ such that $\tau_{iL-1} \subset B(x_{i-1},L-1)$ and $x_{i+1} \in \sigma_{i+1}$ such that $\tau_{iL+1} \subset B(x_{i+1},L-1)$. Then $d(x_{i-1},y_{iL-1})+d(y_{iL-1},y_{iL+1})+d(y_{iL+1},x_{i+1}) \geq d(x_{i-1},x_{i+1})=2L$, so $d(y_{iL-1},y_{iL+1}) \geq 2$. Hence  $\tau_{iL-1} \lessgtr \tau_{iL+1} = 2$.

\mk

We now prove that, for any $1 \leq i \leq n-1$, $\tau_{iL}$ is a maximal clique such that $\tau_{iL} \lessgtr \tau_{iL-1}=1$ and $\tau_{iL} \lessgtr \tau_{iL+1}=1$. Assume that $\tau'_{iL}$ is a clique containing $\tau_{iL}$ such that $\tau'_{iL} \lessgtr \tau_{iL-1}=1$ and $\tau'_{iL} \lessgtr \tau_{iL+1}=1$. Since $\sigma_{i-1} \lessgtr \sigma_{i+1}=2L$, we deduce that $\sigma_{i-1} \lessgtr \tau'_{iL}=L$ and $\sigma_{i+1} \lessgtr \tau'_{iL}=L$. Since $\sigma_0,\sigma_1,\dots,\sigma_n$ is a local $L$-clique-path, by maximality of $\sigma_i$ we deduce that $\tau'_{iL}=\sigma_i=\tau_{iL}$.

\mk

So we have proved that $\tau_0,\tau_1,\dots,\tau_{nL}$ is a local $1$-clique-path. Let $x_0 \in \tau_0=\sigma$ such that $\tau_1 \subset B(x_0,1)$, and let $x_n \in \tau_{nL}=\tau$ such that $\tau_{nL-1} \subset B(x_n,1)$. According to Proposition~\ref{pro:local_clique_path_between_vertices_L1}, $\{x_0\},\tau_1,\dots,\tau_{n-1},\{x_n\}$ is a $L$-clique-path.

\mk

Then for any $1 \leq i < j \leq n-1$, we have $\tau_{iL} \lessgtr \tau_{jL}=(j-i)L$, so $\sigma_i \lessgtr \sigma_j = (j-i)L$. We deduce that $\{x_0\},\sigma_1,\sigma_2,\dots,\sigma_{n-1},\{x_n\}$ is a $L$-clique-path.\ep

So we immediately deduce from Proposition~\ref{pro:local_clique_path_between_vertices_Lqcq} the following crucial local-to-global result.

\bthm \label{thm:local_to_global_clique_path}
In a Helly graph with finite combinatorial dimension, for any $L \geq 1$, any bi-infinite local $L$-clique-path is an $L$-clique-path.
\ethm

We also deduce that any hyperbolic automorphism of a Helly graph has an invariant clique-path.

\bpro \label{pro:hyperbolic_clique_path}
Fix a Helly graph $X$ with finite combinatorial dimension $N$, and let $g$ denote a hyperbolic automorphism of $X$. There exist integers $a,L \in \N \bs \{0\}$ and a bi-infinite $L$-clique-path $(\sigma_n)_{n \in \Z}$ in the $N^\text{th}$ Helly subdivision of $X'_N$ of $X$ such that $\forall n \in \Z, g^a \cdot \sigma_n=\sigma_{n+1}$.
\epro

\bp
According to Proposition~\ref{pro:characterizations_hyperbolic}, up to replacing $X$ with its $N^\text{th}$ Helly subdivision of $X'_N$, there exist integers $a,L \in \N \bs \{0\}$ and a vertex $x \in X$ such that $\forall n \in \N, d(x,g^{an} \cdot x)=nL$.

\mk

According to Theorem~\ref{ref:thm_combdim_cliques}, since $X$ has finite combinatorial dimension, there is a bound on the length of chains of round cliques. Let us then consider a clique $\sigma$ in an $L$-clique path from $x$ to $g^{an_0} \cdot x$, which is maximal with respect to the inclusion, over all choices of $n_0 \geq 1$ and over all choices of $L$-clique paths.

Since $\sigma \lessgtr g^{an_0} \cdot \sigma = n_0L$, Let us consider an $L$-clique path from $\sigma_0=\sigma$ to $\sigma_{n_0}=g^{an_0} \cdot \sigma$. Now consider the binfinite family of cliques $(\sigma_n)_{n \in \Z}$ of $X$ defined by $\sigma_{n+n_0}=g^{an_0} \cdot \sigma_n$ for all $n \in \Z$. It is a local $L$-clique-path, except possibly at $\sigma_{kn_0}$, for $k \in \Z$. Since $\sigma$ is maximal with respect to inclusion, we deduce that it is also a local $L$-clique-path at $\sigma_0=\sigma$. Hence $(\sigma_n)_{n \in \Z}$ is a local $L$-clique-path.

\mk

According to Theorem~\ref{thm:local_to_global_clique_path}, $(\sigma_n)_{n \in \Z}$ is a $L$-clique path, which is invariant by $g^a$.
\ep

\section{Linear orbit growth} \label{sec:linear_orbit_growth}

We will now prove a simple result about elliptic groups of isometries of an injective metric space ensuring that orbits grow linearly.

\blem \label{lem:diameter_doubles}
Let $X$ be an injective metric space, and let $G,H$ denote elliptic isometry groups of $X$, i.e.\ with fixed points in $X$. For any $n \in \N$ and any $x \in X^G$, there exists $y \in X^G$ such that
\beq \diam((GH)^{2^{n-1}} \cdot y) & \leq & \f{1}{2}\diam((GH)^{2^{n}} \cdot x) \mbox{ if $n \geq 1$} \\
\diam(H \cdot y) & \leq & \f{1}{2}\diam((GH)^{2^{n}} \cdot x) \mbox{ if $n =0$},\eeq
where $(GH)^k$ denotes $(GH)^k=\{g_1 h_1 g_2 h_2 \cdots g_k h_k \st g_1,\dots,g_k \in G, h_1,\dots,h_k \in H\}$.
\elem

\bp
Assume that $2L = \diam((GH)^{2^{n}} \cdot x)$ is finite, otherwise the conclusion holds trivially. Then, for any $u,v \in (GH)^{2^{n}}$, we have $d(u\cdot x, v \cdot x) \leq 2L$. In particular, since $X$ is injective, the intersection $\bigcap_{u \in (GH)^{2^{n}}} B(u \cdot x,L)$ is non-empty. Since this intersection is bounded and $G$-invariant, according to~\cite[Proposition~1.2]{Lang2013}, there exists $z \in \bigcap_{u \in (GH)^{2^{n}}} B(u \cdot x,L)$ that is fixed by $G$.

So for every $u \in (GH)^{2^{n}}$, we have $d(z,u \cdot x) \leq L$. Hence we deduce that for every $u,v \in K_n$ we have $d(u \cdot z,v \cdot x) \leq L$, where $K_n=(GH)^{2^{n-1}}$ if $n \geq 1$ and $K_n=H$ if $n=0$.

According to~\cite[Proposition~3.8]{Lang2013}, there exists a conical geodesic reversible bicombing $\gamma$ on $X$, equivariant with respect to isometries of $X$. Let $y=\gamma(x,z,\f{1}{2})$ denote the midpoint of $x$ and $z$ with respect to $\gamma$.

For any $u,v \in K_n$, we have $u \cdot y = \gamma(u \cdot x,u \cdot z,\f{1}{2})$ and $v \cdot y = \gamma(v \cdot z,v \cdot x,\f{1}{2})$. By conicality, we deduce that
$$d(u \cdot y,v \cdot y) \leq \f{1}{2}d(u \cdot x,v \cdot z) + \f{1}{2}d(u \cdot z,v \cdot x) \leq L.$$


So the diameter of $K_n \cdot y$ is at most $L$. And by equivariance of $\gamma$, $y$ is fixed by $G$.
\ep

\bpro \label{pro:linear_growth}
Let $X$ be an injective metric space, and let $G,H$ denote elliptic isometry groups of $X$ such that $d(X^G,X^H)=L>0$. Assume that there exists $x \in X^G$ such that $d(x,X^H)=L$. Then, for any $n \in \N$, there exist $g_1,\dots,g_{n} \in G$ and $h_1,\dots,h_{n}\in H$ such that
$$x_0=x,x_1=g_1h_1 \cdot x, x_2=g_1h_1g_2h_2 \cdot x, \dots, x_n=g_1h_1 \cdots g_{n}h_{n} \cdot x$$
lie on a geodesic. More precisely, for any $0 \leq i \leq j \leq n$, we have
$$d(x_i,x_j) = (j-i)2L.$$
\epro

\bp
Note that it is sufficient to prove the statement when $n$ is a power of $2$.

\mk

Let $2L'$ denote the diameter of $H \cdot x$, we will prove that $L'=L$. For any $h,h' \in H$, we have $d(h \cdot x,h' \cdot x) \leq 2L'$, so since $X$ is injective the intersection $\bigcap_{h \in H} B(h \cdot x,L')$ is non-empty. Since this intersection is furthermore $H$-invariant, according to~\cite[Proposition~1.2]{Lang2013}, there exists $y \in \bigcap_{h \in H} B(h \cdot x,L')$ that is fixed by $H$. Since $d(x,y) \leq L'$, we deduce that $L \leq L'$. And if $z \in X^H$ is such that $d(x,z)=L$, for any $h \in H$ we have $d(x,h \cdot x) \leq d(x,z)+d(z,h \cdot x) \leq 2L$, so $\diam(H \cdot x) \leq 2L$. In conclusion $L'=L$, and the diameter of $H \cdot x$ equals $2L$.

\mk

For any $y \in X^G$, as in the previous paragraph, we may argue that the diameter of $H \cdot y$ is at least $2L$. According to Lemma~\ref{lem:diameter_doubles}, by induction on $m \geq 0$, we deduce that the diameter of $(GH)^{2^{m}} \cdot x$ is at least $2^{m+2}L$.

\mk

In particular, for any $m \geq 0$, there exist $g_1,\dots,g_{2^{m+1}} \in G$ and $h_1,\dots,h_{2^{m+1}}\in H$ such that $d(x,g_1h_1 \cdots g_{2^{m+1}}h_{2^{m+1}} \cdot x) \geq 2^{m+2}L$. Since for any $1 \leq i \leq 2^{m+1}$ we have $d(x,g_ih_i \cdot x) \leq 2L$, we deduce that $d(x,g_1h_1 \cdots g_{2^{m+1}}h_{2^{m+1}} \cdot x) = 2^{m+2}L$, and furthermore that for any $0 \leq i \leq j \leq 2^{m+1}$, we have
$$d(g_1h_1 \cdots g_ih_i \cdot x,g_1h_1 \cdots g_jh_j \cdot x) = (j-i)2L.$$
\ep

\section{Locally elliptic actions} \label{sec:locally_elliptic}

We now turn to one of the main results of this article, namely that locally elliptic actions on Helly graphs are globally elliptic. Theorem~\ref{thm:helly_elliptic} is an immediate consequence of the following result, since any graph isometrically embeds in its Helly hull.

\bthm \label{thm:locally_elliptic}
Let $X$ be a Helly graph with finite combinatorial dimension, and assume that $G,H$ are elliptic automorphism groups of $X$. Then either $\<G,H\>$ is elliptic, or there exists a hyperbolic element in $\<G,H\>$.
\ethm

\bp
By assumption, $G$ and $H$ have fixed points in $EX$. Let $D \geq 1$ denote the combinatorial dimension of $X$. According to Proposition~\ref{pro:distance_fixed_points_realized_vertices}, up to replacing $X$ with its Helly subdivision $X'_{2D}$, we may assume that the minimal distance between $(EX)^G$ and $(EX)^H$ is realized by vertices of $X$. Let $x \in X^G$ and $y \in X^H$ denote vertices such that $d(x,y)=L=d(EX^G,EX^H)$.

\mk

If $L=0$, then $x=y$ is fixed by $G$ and $H$, so $\<G,H\>$ is elliptic.

\mk

According to Theorem~\ref{ref:thm_combdim_cliques}, since $X$ has finite combinatorial dimension, there is a bound on the length of chains of round cliques.

Choose $n \geq 1$ and $u \in (GH)^n$ such that $d(x,u \cdot x)=2n L$ and there exists a clique-path from $x$ to $u \cdot x$, containing a clique $\sigma$ which is maximal with respect to inclusion, among all choices of $n \geq 1$, $u=u_1 \dots u_n \in (GH)^n$ and all clique-paths containing the vertices
$$x,u_1 \cdot y, u_1 \cdot x, u_1u_2 \cdot y, u_1u_2 \cdot x, \dots,u_1u_2 \dots u_n \cdot y,u_1u_2 \dots u_n \cdot x.$$
Let $1 \leq k \leq 2nL-1$ such that $\sigma \lessgtr x=k$ and $\sigma \lessgtr u \cdot x=2nL-k$.

\mk

Up to translating by an element of $\<G,H\>$, we may assume that $\sigma$ lies between $x$ and $y$, for some $x \in X^G$ and $y \in X^H$ at distance $L$. According to the proof of Proposition~\ref{pro:linear_growth}, the diameter of $HG \cdot y$ equals $4L$. Hence there exist $g \in G$ and $h \in H$ such that $d(y,gh \cdot x)=3L$. So we have $\sigma \lessgtr hg \cdot \sigma = 2L$. Let us consider a clique-path $\sigma_0,\sigma_1,\dots,\sigma_{2L}$ from $\sigma_0=\sigma$ to $\sigma_{2L}=gh \cdot \sigma$. Now extend this sequence of cliques using the $gh$-action, by letting $\sigma_{n+2L}=gh \cdot \sigma_n$ for all $n \in \Z$.

\mk

This sequence of cliques $(\sigma_n)_{n \in \Z}$ is invariant under translation by $gh$. Moreover, it is a local clique-path except possibly at each clique $\sigma_{2kL}$, where $k \in \Z$. By maximality of the round clique $\sigma$, we deduce that it is also a local clique-path at $\sigma_0=\sigma$. Hence $(\sigma_n)_{n \in \Z}$ is a local clique-path. According to Theorem~\ref{thm:local_to_global_clique_path}, we conclude that it is a clique-path, invariant under $gh \in \<G,H\>$. 

So according to Proposition~\ref{pro:characterizations_hyperbolic}, $gh$ is hyperbolic and it has infinite order.
\ep

	\section{Systolic complexes and graphical small cancellation complexes}
	\label{s:sc}
	Recall that, for $k\geq 6$, a flag simplicial complex $X$ is \emph{$k$-large} if every cycle in (the $1$-skeleton of) $X$ of length smaller than $k$ has a \emph{diagonal}, that is, there exists an edge in $X$ connecting two non-consecutive vertices of the cycle. 
A simply connected $k$-large complex is \emph{$k$-systolic}, and $6$-systolic complexes are usually called simply \emph{systolic}. Observe that $k$-systolicity implies $l$-systolicity, for $k\geq l$.
Such complexes were first considered by Chepoi \cite{Chepoi2000} in relation with \emph{bridged graphs}, which are their $1$-skeleta. The study of groups acting geometrically on systolic complexes was initiated by 
Januszkiewicz-\'Swi\polhk{a}tkowski \cite{JanuszkiewiczSwiatkowski2006} and by Haglund \cite{Haglund2003}. 

In this section we consider the following class of group actions on systolic complexes. We say that
$G$ acts \emph{strongly rigidly} on $X$ if the only $g\in G$ stabilizing a (possibly infinite) simplex of dimension
at least $1$ is the indentity. Such actions appear naturally as actions induced by actions of small cancellation groups on their Cayley graphs as explained below. 

In what follows, unless stated otherwise,  we consider a group $G$ acting strongly rigidly on a $18$-systolic complex $X$. 
\begin{lemma}
	\label{l:fix1}
	Let $g\in G\setminus \{1\}$ fix a vertex in $X$. Then $\fix (g)$ is a vertex in $X$.
\end{lemma}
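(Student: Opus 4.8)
The plan is to extract the whole statement from strong rigidity together with a single structural feature of systolic complexes. Throughout, let $d$ denote the graph metric on the $1$-skeleton of $X$, write $X^{(0)}$ for the vertex set, and recall that $X$ is flag. First I would record two immediate consequences of strong rigidity. Since $g \neq 1$, it stabilizes no simplex of dimension at least $1$; in particular it fixes no point in the relative interior of a positive-dimensional simplex, as such a point would force $g$ to preserve the (unique) minimal simplex containing it. Hence $\fix(g)$ consists entirely of vertices, so the conclusion "$\fix(g)$ is a vertex" reduces to showing that $g$ fixes exactly one vertex. Moreover $g$ fixes no two adjacent vertices, because fixing both endpoints of an edge would stabilize that $1$-simplex.

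The heart of the argument is the following standard property of systolic complexes (convexity of balls and the structure of geodesics, see~\cite{Chepoi2000,JanuszkiewiczSwiatkowski2006}): for any two vertices $v,w$ with $n=d(v,w)\geq 1$, the \emph{descent set}
\[
\hat w_v \;=\; \{\, u \in X^{(0)} \st u \text{ adjacent to } v,\ d(u,w)=n-1 \,\}
\]
is a nonempty simplex of $X$ (its vertices are pairwise adjacent, so by flagness they span a simplex $\tau$, possibly infinite-dimensional). Now suppose for contradiction that $v$ and $w$ are two distinct vertices fixed by $g$; by the previous paragraph $n=d(v,w)\geq 2$. Since $g$ fixes both $v$ and $w$ and is an automorphism, it preserves adjacency to $v$ and preserves the distance $d(\cdot,w)=d(g\cdot,gw)$, so it maps $\hat w_v$ bijectively onto itself and therefore stabilizes the simplex $\tau$. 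If $\dim\tau\geq 1$ this contradicts strong rigidity outright. Otherwise $\hat w_v=\{u\}$ is a single vertex, which is then fixed by $g$; but $u$ is adjacent to $v$ and both are fixed, so $g$ stabilizes the edge $\{u,v\}$, again contradicting strong rigidity.

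Thus $g$ fixes at most one vertex, and since by hypothesis it fixes at least one, $\fix(g)$ is a single vertex, as claimed. The only non-elementary ingredient is the fact that the descent set $\hat w_v$ is a simplex, which is exactly where systolicity enters; I expect this to be the point to cite with care (its usual proof is by induction on $n$, the base case $n=2$ being the observation that two common neighbours of $v,w$ must be adjacent, since the $4$-cycle $v\,u_1\,w\,u_2$ has no diagonal $v w$ and $X$ is $4$-large). Everything else is a direct exploitation of the strong rigidity hypothesis: the mechanism is simply that "a fixed vertex together with the first step of a geodesic toward another fixed vertex" always produces a $g$-invariant simplex of dimension at least $1$ or a fixed neighbouring vertex, both of which are forbidden.
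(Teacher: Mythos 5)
Your proof is correct and is essentially the paper's own argument: your ``descent set'' $\hat w_v$ is precisely the intersection $I(v,w)\cap X_v$ of the interval with the link of $v$ that the paper uses, invoked as a non-empty (possibly infinite) simplex and then fed to strong rigidity. If anything, you are slightly more careful than the paper in handling the degenerate cases (a zero-dimensional $\hat w_v$, and fixed points interior to positive-dimensional simplices, both resolved by joining with the fixed vertex $v$); just note that the base-case observation about $4$-cycles needing diagonals uses $5$- or $6$-largeness, not ``$4$-largeness'', which is vacuous.
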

\begin{proof}
	Suppose there are two distinct vertices $v,w\in X$ in $\fix (g)$. Consider the interval $I:=I(v,w)$ between them. Let $\sigma = I \cap X_v$, where $X_v$ denotes the link of $v$ in $X$. It is a non-empty (possibly infinite) simplex, see e.g.\ \cite[Lemma~2.3]{Osajda-connect}. We have $gI=I$, hence $g\sigma=\sigma$.
	By the strong rigidity of the action we have $g=1$, contradiction.  
\end{proof}

\begin{lemma}
	\label{l:fk4}
	Let  $w$ be a vertex in the link $X_v$, for $v=\fix (g)$. Then there exists a natural number $k>0$ such that $d_{X_v}(w,g^kw)\geq 5$.
\end{lemma}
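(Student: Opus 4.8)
The plan is to argue by contradiction and to produce a $g$-invariant simplex of the link $X_v$, which the strong rigidity of the action forbids. So suppose that $d_{X_v}(w,g^kw)\le 4$ for every $k>0$. Applying the isometry $g^k$ of $X_v$ gives $d_{X_v}(w,g^{-k}w)=d_{X_v}(g^kw,w)\le 4$ and $d_{X_v}(g^iw,g^jw)=d_{X_v}(w,g^{j-i}w)\le 4$, so the whole orbit $O=\langle g\rangle\cdot w$ has diameter at most $4$ in $X_v$. The goal is to show that such a bounded orbit forces an invariant simplex.

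First I would record how $g$ acts on the link. Since $v=\fix(g)$, the automorphism $g$ preserves $X_v$, and it has no fixed vertex there: a fixed $u\in X_v$ would yield a $g$-fixed edge $\{v,u\}$ of $X$, whence $g=1$ by strong rigidity, a contradiction. Likewise $g$ stabilizes no simplex $\sigma\subseteq X_v$ of dimension at least $1$, since then it would stabilize the simplex $\{v\}\ast\sigma$ of $X$, of dimension at least $2$, again forcing $g=1$. Consequently, \emph{any} $g$-invariant simplex of $X_v$, of any dimension, will give the desired contradiction. I would also note that, as $X$ is $18$-systolic (hence locally $18$-large) and links of locally $18$-large complexes are again locally $18$-large, the link $X_v$ is locally $18$-large; in particular every full (diagonal-free) cycle of $X_v$ has length at least $18$ (cf.\ \cite{JanuszkiewiczSwiatkowski2006}).

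The heart of the argument is then to build a bounded, $g$-invariant, \emph{systolic} subcomplex $C\subseteq X_v$ containing $O$. I would take $C$ to be the combinatorial convex hull of $O$, i.e.\ the smallest full subcomplex containing $O$ and closed under taking geodesics between its vertices; it is $g$-invariant because $g$ permutes $O$ and carries geodesics to geodesics. Being a full subcomplex of $X_v$, it inherits local $18$-largeness. To see that $C$ is simply connected, hence systolic, I would induct on the length $L$ of a cycle $c$ in $C$: for $4\le L\le 17$, $18$-largeness provides a diagonal, which lies in $C$ by fullness and splits $c$ into two shorter cycles; for $L\ge 18$, two vertices at cyclic distance $\lfloor L/2\rfloor\ge 9$ are joined inside $C$ (by convexity) by a geodesic whose length is at most the diameter of $C$, and—provided this diameter stays below $9$—this again splits $c$ into two strictly shorter cycles. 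Thus every cycle bounds and $C$ is systolic.

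Finally I would invoke the fixed-point property of systolic complexes—a group acting on a bounded systolic complex stabilizes a simplex—applied to $\langle g\rangle$ acting on the bounded systolic complex $C$. The resulting $g$-invariant simplex lies in $X_v$, contradicting the first paragraph, since it can be neither a fixed vertex nor an invariant positive-dimensional simplex. Hence some $k>0$ must satisfy $d_{X_v}(w,g^kw)\ge 5$. I expect the main obstacle to be exactly the middle step: controlling the diameter of the hull $C$ so that it remains well below half the systole (this is precisely where the hypothesis $18$, rather than $6$, is used, and it matches the scale-$4$ orbit via $18/2=9>2\cdot 4$), together with pinning down the correct form of the bounded-orbit fixed-simplex theorem for systolic complexes. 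By contrast, the two strong-rigidity contradictions and the transfer of $18$-largeness to the link are routine.
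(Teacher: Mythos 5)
Your overall skeleton is exactly that of the paper's proof: assume $d_{X_v}(w,g^kw)\le 4$ for all $k$, note the orbit is bounded, produce a bounded $\langle g\rangle$-invariant \emph{systolic} subcomplex of $X_v$, apply the fixed-simplex theorem for bounded systolic complexes, and then join the invariant simplex with $v$ to contradict strong rigidity. The outer steps are correct. The genuine gap is in the one step you yourself flag: the construction of the bounded invariant systolic subcomplex. Taking the geodesic convex hull $C$ of the orbit does not work as stated, because $X_v$ is only $18$-large, \emph{not} simply connected, hence not systolic; in such a complex balls need not be convex, so there is no a priori bound on the hull. The closure under geodesics is iterative: geodesics between orbit points stay in $B_8(w,X_v)$, but geodesics between points of that first closure may leave $B_8(w,X_v)$, and nothing prevents the iteration from growing without bound. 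Your simple-connectedness induction needs $\diam(C)\le 8$ before it can even start on cycles of length $\ge 18$, but that bound is precisely what is missing — and the convexity you would want to invoke to get it is only available \emph{after} systolicity is established, so the argument is circular.

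The paper closes this gap by replacing the hull with an intersection of balls: $Z:=\bigcap_k B_4(g^kw,X_v)$. This set contains $w$ (since $w\in B_4(g^kw)$ for every $k$, by your own orbit-diameter observation), is contained in $B_8(w,X_v)$, and is manifestly $\langle g\rangle$-invariant, since $g$ permutes the balls. The key imported fact — and the precise place where the hypothesis $18$ enters — is \cite[Lemma~2.5]{Osajda-connect}: in an $18$-large complex, the ball $B_8(w,X_v)$ is itself systolic, and the balls $B_4(g^kw,X_v)$ sit inside it as balls of that systolic complex, hence are convex subcomplexes. An intersection of convex subcomplexes of a systolic complex is convex, hence systolic; so $Z$ is a nonempty, bounded, $\langle g\rangle$-invariant systolic complex, and your final two steps (fixed simplex in $Z$, then the simplex spanned by it and $v$, contradicting strong rigidity) go through verbatim. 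So your diagnosis of where the difficulty lies, and of why $18$ rather than $6$ is needed, is accurate, but the hull construction you propose cannot be repaired into a proof without essentially switching to this ball-intersection argument.
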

\begin{proof}
	Suppose not, that is, suppose that for every $k$ we have $d_{X_v}(w,g^kw) \leq 4$. 
	Consider the intersection $Z:=\bigcap_k B_4(g^kw,X_v)$ of balls of radius $4$ around $g^kw)$ in the link $X_v$. It is nonempty and it is contained in $B_8(w,X_v)$.
	By \cite[Lemma 2.5]{Osajda-connect} the ball $B_8(w,X_v)$ is itself a systolic complex and the balls in $X_v$
	of the form $B_4(g^kw,X_v)$ are balls in $B_8(w,X_v)$, hence are convex subcomplexes of $B_8(w,X_v)$.
	Therefore $Z$ is a convex subcomplex of a systolic complex $B_8(w,X_v)$, hence it is systolic itself. Furthermore, $Z$ is $\langle g \rangle$-invariant and
	bounded. It follows that there exists a (possibly infinite) simplex $\sigma$ in $Z$ fixed by $g$. It follows 
	that the simplex of $X$ spanned by $\sigma$ and $v$ is fixed by $g$, contradicting the strict rigidity of the action.
\end{proof}

\begin{lemma}
	\label{l:fix3}
	Let $g,g' \in G\setminus \{1\}$ such that $\fix(g)\neq \fix(g')$. Then there exist natural numbers $k,k'>0$ such that $g^kg'^{k'}$ is an element in $G$ acting with an unbounded orbit on $X$.
\end{lemma}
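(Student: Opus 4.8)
The plan is to produce an element of the form $g^{k}g'^{k'}$ that translates a bi-infinite geodesic of $X$, and is therefore of unbounded orbit. Write $v=\fix(g)$ and $v'=\fix(g')$; by Lemma~\ref{l:fix1} these are single vertices, and $v\neq v'$ by hypothesis. Fix a geodesic $\gamma$ from $v$ to $v'$ and let $w\in X_v$, $w'\in X_{v'}$ be the vertices of $\gamma$ adjacent to $v$ and to $v'$ respectively. Applying Lemma~\ref{l:fk4} to $g$ at $w$ and to $g'$ at $w'$ yields $k,k'>0$ with $d_{X_v}(w,g^{k}w)\ge 5$ and $d_{X_{v'}}(w',g'^{k'}w')\ge 5$. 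Set $a=g^{k}$, $b=g'^{k'}$ and $h=ba$; since $h=b\,(g^{k}g'^{k'})\,b^{-1}$ is conjugate to $g^{k}g'^{k'}$, it is enough to show that $h$ has unbounded orbit.

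I would then assemble an $h$-invariant path. For $i\in\Z$ put $P_i=h^{i}v$ and $Q_i=h^{i}v'$, join $P_i$ to $Q_i$ by the geodesic $h^{i}\gamma$, and join $Q_i$ to $P_{i+1}$ by the geodesic $h^{i}(b\gamma^{-1})$ (note $h^{i}bv=h^{i+1}v=P_{i+1}$ because $av=v$). This produces a bi-infinite path $P$ through $\dots,P_i,Q_i,P_{i+1},\dots$ that is shifted along itself by $h$. Each maximal straight piece is a geodesic, so the only vertices to check are the corners $Q_i$ and $P_i$. At $Q_0=v'$ the incoming and outgoing directions are $w'$ and $bw'$, at link-distance $d_{X_{v'}}(w',bw')\ge 5$; at $P_1=bv$ they are $bw$ and $baw$, and applying the link isometry $b^{-1}\colon X_{bv}\to X_v$ gives link-distance $d_{X_v}(w,aw)\ge 5$. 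All remaining corners are $h$-translates of these two, hence also turn by link-distance $\ge 5$. Thus $P$ is a local geodesic making turns of link-distance at least $5$ at every corner.

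The heart of the argument, and the step I expect to be the main obstacle, is the local-to-global statement: \emph{in an $18$-systolic complex a bi-infinite local geodesic whose turns all have link-distance at least $5$ is a global geodesic.} A bare local geodesic need not be globally geodesic in a systolic complex, so the large turns are essential, and this is precisely where $18$-systolicity is used. I would prove it by a combinatorial Gauss--Bonnet argument: if some finite subpath from $P_s$ to $P_t$ failed to be geodesic, I would span the loop formed by this subpath and a genuine geodesic between its endpoints by a minimal singular disk diagram $D$; $6$-largeness forces nonpositive combinatorial curvature at the interior vertices of $D$, while each sharp corner ($\ge 5$) contributes a definite negative amount along the $P$-side of $\partial D$, and with the $18$-systolic bookkeeping the total curvature falls below $2\pi=2\pi\chi(D)$, a contradiction. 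Making this curvature accounting precise (choosing the angle assignment, controlling the short straight pieces, and isolating it as a clean systolic analogue of Theorem~\ref{thm:local_to_global_clique_path}) is the delicate point.

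Granting this, $P$ is a bi-infinite geodesic and $h$ shifts it by one period; since one period consists of two geodesic segments of length $d(v,v')\ge 1$, we obtain $d(v,h^{n}v)=d(P_0,P_n)=2n\,d(v,v')\to\infty$. Hence $h$ has unbounded orbit on $X$, and so does the conjugate element $g^{k}g'^{k'}$, which is exactly the assertion of the lemma.
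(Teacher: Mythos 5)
Your setup is sound and, up to conjugation, identical to the paper's: the paper takes $f=g^kg'^{k'}$ directly and considers the $f$-invariant path $\alpha=\bigcup_{i\in\Z}f^i(\gamma\cup g^k\gamma)$, whose corners are exactly the translates of $v$ and $v'$, with turns of link-distance $\geq 5$ supplied by Lemma~\ref{l:fk4} --- precisely your path $P$ in different notation. The genuine gap is that you never prove the step you yourself flag as the heart of the matter: the local-to-global statement that such a path admits no shortcuts. As submitted, the proof is incomplete exactly there, and that step is the entire content of the paper's argument beyond the setup. Moreover your phrasing of the needed lemma (``a bi-infinite local geodesic whose turns all have link-distance at least $5$ is a global geodesic'') is not quite the right statement: the Gauss--Bonnet bookkeeping genuinely uses that the pieces between corners are \emph{global} geodesics of $X$, since along a merely local geodesic the positive boundary defects can accumulate without bound (think of paths in the flat triangulated plane that keep turning with defect $+1$). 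In your construction the pieces are translates of $\gamma$, so this is available, but the lemma must be formulated for concatenations of geodesics.

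For comparison, here is how the paper closes the gap. Assume $f$ (equivalently your $h$) has bounded orbits; then $\alpha$ is bounded, so there exist $u,u'$ on $\alpha$ with $d_X(u,u')$ strictly smaller than their distance along $\alpha$; pick such a pair at minimal distance, let $\alpha'$ be the subpath of $\alpha$ between them, $\beta$ a geodesic from $u'$ to $u$, and $D$ a minimal singular disk diagram for the loop $\alpha'\beta$. Minimality makes $D$ systolic, so combinatorial Gauss--Bonnet forces the boundary defects (three minus the number of incident triangles) to sum to at least $6$. The count is then: $u,u'$ contribute at most $2$ each; each corner $f^iv,f^iv'$ interior to $\alpha'$ contributes at most $-1$ (in fact at most $-2$, because the triangles of $D$ at such a vertex map to a path in the link of $X$ joining the two directions, hence number at least $5$); the interior vertices of each geodesic piece ($f^i\gamma$, $f^ig^k\gamma$, or $\beta$) contribute at most $+1$ in total per piece --- this is a real lemma about geodesics in systolic disk diagrams, not a formality; and the existence of the shortcut forces at least one corner strictly between $u$ and $u'$, which drives the total strictly below $6$, a contradiction. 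Note also that, contrary to your remark, $18$-systolicity is not what powers this curvature count (ordinary $6$-systolicity of $D$ suffices); it enters only through Lemma~\ref{l:fk4}, via convexity of radius-$4$ balls inside radius-$8$ balls in links, to manufacture the $\geq 5$ turns in the first place.
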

\begin{proof}
	Choose a $1$-skeleton geodesic $\gamma$ between $v:=\fix(g)$ and $v':=\fix(g')$. Let $w$ and $w'$ denote vertices on $\gamma$ adjacent to, respectively, $v$ and $v'$. 
	By Lemma~\ref{l:fk4} there exist $k,k'$ such that $d_{X_v}(w,g^kw)\geq 5$ and $d_{X_{v'}}(w',{g'}^{k'}w')\geq 5$.
	We claim that $f:=g^kg'^{k'}$ is not elliptic.
	
	Suppose not, that is, suppose that the orbit of   $f$ is bounded.
	Consider the path (see Figure~\ref{fig:systolic})
	\begin{align}
		\alpha:= \bigcup_{i\in\mathbb{Z}}f^i(\gamma \cup g^k\gamma).
	\end{align}
	Since the orbits of $f$ are bounded, there exist vertices $u,u'$ in $\alpha$ whose distance in $\alpha$ is strictly bigger than their distance in $X$. We can choose $u,u'$ with the minimal distance among such pairs.  Let $\alpha'$ be the path being part of $\alpha$ between $u$ and $u'$, and let $\beta$ be a geodesic in $X$ between 
	$u'$ and $u$. Consider the closed path $\alpha'\beta$.
	\begin{figure}[h!]
		\begin{center}
			\includegraphics[scale=0.6]{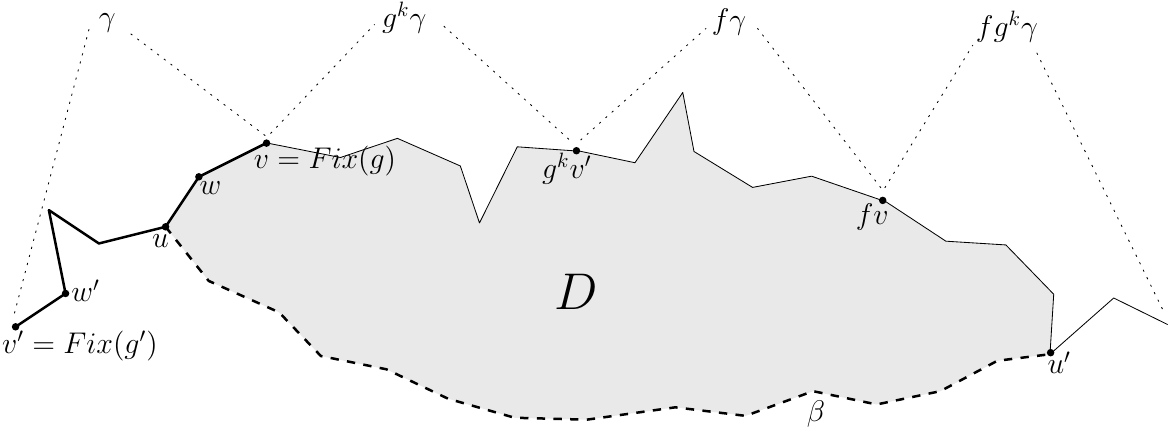}
		\end{center}
		\caption{The path $\alpha$, a shortcut $\beta$ (dashed), and a disk diagram $D$ (shaded).}\label{fig:systolic}
	\end{figure}
	Consider a minimal (singular) disk diagram $D$ for $\alpha'\beta$ (see Figure~\ref{fig:systolic}). Such a diagram is systolic. 
	Recall, that the \emph{defect} of a vertex $z$ on the boundary $\alpha'\beta$ of $D$ is equal to three minus the number of triangles of $D$ containing $z$. By the combinatorial Gauss-Bonnet Theorem and since $D$ is systolic
	we have that the sum of defects of the boundary vertices of $D$ is at least $6$. By the choice of $u,u'$ the sum of their defects is at most $4$. By the choice of $k,k'$ the defects in the points $f^iv,f^iv'$ are at most $-1$.
		The sum of defects along interior vertices of a geodesic (in particular,  $f^i\gamma$ or $f^ig^k\gamma$ or $\beta$) is at most $1$. Since there is a shortcut between $u$ and $u'$,
		the sum of defects of vertices of $\alpha'$ different than $u,u'$ is at most $0$. Therefore, the sum of defects along the boundary vertices is at most $5$, a contradiction.  
\end{proof}

\medskip

\noindent
\emph{Proof of Theorem~\ref{thm:systolic_elliptic}.}
If all the generators of $G$ have a common fixed point then the $G$-action is elliptic. If not, then by Lemma~\ref{l:fix3} there exists a non-elliptic element leading to a contradiction. 
\hfill $\square$

\medskip

In the rest of the section we turn towards actions on graphical small cancellation complexes.
We do not give all the definitions here directing the reader to \cite{OsaPry2018}, whose notation we use in what follows.

Let $Y$ be a simply connected graphical $C(18)$-small cancellation complex. Let $G$ be a finitely generated group acting on $Y$
by automorphisms such that 
the action induces a free action on the $1$-skeleton $Y^{(1)}$ of $Y$. 
Recall \cite{OsaPry2018} that the \emph{Wise complex} $W(Y)$ of $Y$ is the nerve of the covering of $Y$ by (closed) $2$-cells.
We assume here every edge in $Y$ is contained in some $2$-cell -- this is not restrictive since one can always add equivariantly `artificial' $2$-cells preserving small cancellation conditions.
Since $Y$ is  $C(18)$ its Wise complex $W(Y)$ is $18$-systolic by
\cite[Theorem 7.10]{OsaPry2018}.


\begin{lemma}
	\label{l:2rigid}
	The induced $G$-action on the Wise complex $W(Y)$ is strongly rigid. 
\end{lemma}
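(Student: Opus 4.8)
The plan is to derive a contradiction with the freeness of the induced action on $Y^{(1)}$: I will show that any $g\in G$ stabilizing a simplex of dimension at least $1$ in $W(Y)$ must fix a point of $|Y^{(1)}|$, which forces $g=1$.

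First I would unwind the definition of the nerve. The vertices of $W(Y)$ are the $2$-cells of $Y$, and a family $\mathcal C$ of pairwise distinct $2$-cells spans a simplex exactly when $I:=\bigcap_{C\in\mathcal C}C$ is non-empty; thus a simplex of dimension at least $1$ corresponds to such a family with $|\mathcal C|\geq 2$ and $I\neq\emptyset$. Since $g$ acts on $Y$ by automorphisms it permutes the $2$-cells, and stabilizing the simplex means precisely that $g$ permutes $\mathcal C$. Consequently $g(I)=I$, so $g$ preserves the non-empty set $I$. Because the interiors of distinct $2$-cells are disjoint, $I$ is contained in $\partial C_0\cap\partial C_1$ for any two members $C_0,C_1\in\mathcal C$; in particular $I\subseteq\partial C_0$, a cycle in $Y^{(1)}$.

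The heart of the argument — and the step I expect to be the main obstacle — is to control the shape of $I$ using the $C(18)$ condition. For any two distinct cells $C,C_0\in\mathcal C$, the intersection $C\cap C_0$ lies in $\partial C_0$ and, by the small-cancellation structure of $Y$ (see~\cite{OsaPry2018}), is a single piece, hence a connected arc that is a \emph{proper} sub-arc of the cycle $\partial C_0$: it cannot be all of $\partial C_0$, since $C(18)$ forbids $\partial C_0$ from being a single piece. Writing $I=\bigcap_{C\in\mathcal C\setminus\{C_0\}}(C\cap C_0)$ as an intersection of such arcs, all meeting in the non-empty set $I$, I would then verify that $I$ is itself a connected arc — either a single vertex or a finite geodesic segment in $Y^{(1)}$. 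Establishing that distinct cells meet in a single piece, and that these pieces intersect in a connected set even when $\mathcal C$ is infinite, is exactly where the combinatorics of~\cite{OsaPry2018} enters and is the technical core of the lemma.

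Finally, I would exploit that $g$ preserves the bounded connected arc $I$. Since $I$ is bounded, $g$ cannot translate along it, so $g$ acts on $I$ either as the identity or as a reflection; in either case $g$ fixes a point of $I$, namely a vertex or the midpoint of the central edge. This fixed point lies in $|Y^{(1)}|$, contradicting the freeness — in particular the absence of inversions — of the induced $G$-action on $Y^{(1)}$. Hence no nontrivial element of $G$ stabilizes a simplex of dimension at least $1$, which is precisely the statement that the induced $G$-action on $W(Y)$ is strongly rigid.
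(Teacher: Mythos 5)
Your overall strategy matches the paper's: an element $g$ stabilizing a positive-dimensional simplex of $W(Y)$ preserves the non-empty bounded intersection of the corresponding cells, hence fixes a vertex or stabilizes an edge of $Y^{(1)}$, contradicting freeness. However, the middle step --- the one you yourself single out as the technical core --- is stated incorrectly, because you are working with the \emph{classical} small cancellation picture instead of the \emph{graphical} one. In a graphical small cancellation complex in the sense of \cite{OsaPry2018}, a ``$2$-cell'' is (the image of) a cone over a connected graph, not a disk: its boundary is a graph, not a cycle. Consequently the intersection of two distinct closed cells is in general a \emph{tree} (under $C(2)$, and a fortiori $C(18)$, a piece cannot contain a simple cycle of a relator graph, so intersections are forests; connectedness is part of what \cite{OsaPry2018} proves), and the common intersection $I$ of the family $\mathcal{C}$ is a non-empty bounded tree --- not ``a connected arc that is a proper sub-arc of the cycle $\partial C_0$,'' and not a vertex or geodesic segment. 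Your closing argument, which analyzes $g$ acting on an arc as ``identity or reflection,'' therefore does not apply as written.

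The gap is repairable, and the repair is exactly the paper's proof: invoke from \cite{OsaPry2018} that $Z=\bigcap_{C\in\mathcal{C}} C$ is a non-empty bounded tree, then use the standard fact that any automorphism of a bounded tree fixes a vertex or stabilizes an edge; either case contradicts freeness of the induced action on $Y^{(1)}$. So the skeleton of your argument is the right one, but the structural claim you make about $I$ is false in the graphical setting, and the arc combinatorics (two arcs on a circle, proper sub-arcs of $\partial C_0$, reflection about a midpoint) must be replaced throughout by the tree statement.
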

\begin{proof}
		Let $g\in G$. suppose that there exists a simplex $\sigma=\{ z_0,z_1,\ldots\}$ of dimension at least $1$ in $W(Y)$ with  $g\sigma=\sigma$.
	It follows that for the intersection $Z:=\bigcap z_i \subseteq Y$ we have $gZ=Z$. Such intersection is a non-empty bounded tree,
	hence there exists a vertex or an edge fixed by $g$. By the assumption on freeness of the action
	we have that $g=1$.
\end{proof}

\medskip

\noindent
\emph{Proof of Theorem~\ref{thm:C(13)_elliptic}.}
The action of a $C(18)$ group on the graphical small cancellation complex derived from the Cayley graph
is free on the $1$-skeleton. Hence the theorem follows from Lemma~\ref{l:2rigid} and Theorem~\ref{thm:systolic_elliptic}.	
\hfill $\square$

	\bibliography{mybib}{}
	\bibliographystyle{plain}
	
\end{document}